\documentclass[a4paper, 10pt]{article}\date{}\textheight=28.1cm \voffset=-4cm \textwidth=18.6cm \hoffset=-3.25cm
\footskip=0.5cm
\usepackage{multirow,array,tabularx,amsthm,amsmath,amssymb,mathrsfs,mathtools,picture,graphicx,color,hyperref}
\hypersetup{colorlinks=true,linkcolor=red,citecolor=blue,filecolor=magenta,urlcolor=cyan}
\usepackage{tabularx,booktabs}

\allowdisplaybreaks[1]

\def\_email#1@#2\q_nil{\href{mailto:#1@#2}{{\emailfont #1\emailampersat #2}}}
\newcommand\emailampersat{{\color{cyan}\small@}}

\urlstyle{same}
\newtheorem{thm}{Theorem}[section]
\newtheorem{lma}[thm]{Lemma}
\newtheorem{prop}[thm]{Proposition}
\newtheorem{coro}[thm]{Corollary}
\newtheorem{rmk}[thm]{Remark}
\newtheorem{claim}[thm]{Claim}
\numberwithin{equation}{section}

\title{PB-steric equations: A general model of Poisson--Boltzmann equations}

\author{Jhih-Hong Lyu \thanks{Department of Mathematics, National Taiwan University, Taipei 10617, Taiwan (\tt d06221001@ntu.edu.tw).}, Tai-Chia Lin \thanks{Department of Mathematics, National Taiwan University, Taipei 10617, Taiwan; National Center for Theoretical Sciences, Mathematics Division, Taipei 10617, Taiwan ({\tt tclin@math.ntu.edu.tw}).}}

\begin{document}
\maketitle
\begin{abstract}When ions are crowded, the effect of steric repulsion between ions (which can produce oscillations in charge density profiles) becomes significant and the conventional Poisson--Boltzmann (PB) equation should be modified.
Several modified PB equations were developed but the associated total ionic charge density has no oscillation.
This motivates us to derive a general model of PB equations called the PB-steric equations with a parameter $\Lambda$, which not only include the conventional and modified PB equations but also have oscillatory total ionic charge density under different assumptions of steric effects and chemical potentials.
As $\Lambda=0$, the PB-steric equation becomes the conventional PB equation, but as $\Lambda>0$, the concentrations of ions and solvent molecules are determined by the Lambert type functions.
To approach the modified PB equations, we study the asymptotic limit of PB-steric equations with the Robin boundary condition as $\Lambda$ goes to infinity.
Our theoretical results show that the PB-steric equations (for $0\leq\Lambda\leq\infty$) may include the conventional and modified PB equations.
On the other hand, we use the PB-steric equations to find oscillatory total ionic charge density which cannot be obtained in the conventional and modified PB equations.
\vspace{5mm}\\{\small {\bf Key words.} PB-steric equations, total ionic charge density, oscillation}
\vspace{1mm}\\
{\small {\bf AMS subject classifications.}  35C20, 35J60, 35Q92}
\end{abstract}

\section{Introduction}
\label{sec1}
Understanding the distribution of ions in the electrolyte is one of the most crucial problems in many physical and electrochemical fields.
As a well-known mathematical model, the Poisson--Boltzmann (PB) equations play a central role in the study of ionic distribution.
The conventional PB equations \cite{2003lamm,2008lu} treat ions as point particles without size and can be denoted as
\begin{align}
&-\nabla\cdot(\varepsilon\nabla\phi)=4\pi\rho_0+4\pi\sum_{i=1}^Nz_ie_0c_i(\phi),\label{eq:1.01}\\
&k_BT\ln(c_i)+z_ie_0\phi=\mu_i\quad\text{for}~i=1,\dots,N,\label{eq:1.02}
\end{align}
where $\phi$ is the electrostatic potential, $\varepsilon$ is the dielectric function, and $\rho_0$ is the permanent charge density function.
In addition, $N$ is the number of ion species, $c_i$ is the concentration of $i$th ion species with the valence $z_i\neq0$ for $i=1,\dots,N$.
Moreover, $c_i=c_i^{\text b}\exp(-z_ie_0\phi/k_BT)$, where $c_i^{\text b}$ is the concentration of the $i$th ion species in the bulk, $k_B$ is the Boltzmann constant, $T$ is the absolute temperature, $e_0$ is the elementary charge, and $\mu_i=k_BT\ln(c_i^{\text{b}})$ is the chemical potential.
However, when ions are crowded, steric repulsions may appear due to ion sizes, so the conventional PB equations should be modified (cf. \cite{2003eisenberg,2013eisenberg,1981klein,2020kohn}).

Under the hypothesis of volume exclusion, one may study the case of two-species ions with the same radius and obtain the following modified PB equations (cf. \cite{2011bazant,1942bikerman,1997borukhov,2017huang}):
\begin{align}
\label{eq:1.03}
&-\nabla\cdot(\varepsilon\nabla\phi)=4\pi(z_1e_0c_1+z_2e_0c_2),\\
\label{eq:1.04}
&c_i=c_i^{\text b}\frac{\exp(-z_ie_0\phi/k_BT)}{\displaystyle1-\gamma+\frac\gamma{z_1-z_2}(c_1^{\text b}\exp(-z_1e_0\phi/k_BT)+c_2^{\text b}\exp(-z_2e_0\phi/k_BT))},
\end{align}
for $i=1,2$, where $\gamma$ is the total bulk volume fraction of ions, and $z_1>0$ and $z_2<0$ are the valence of cations and anions, respectively.
Moreover, the concentrations of ion species in the bulk are given by $c_1^{\text b}=-z_2$ and $c_2^{\text b}=z_1$.
For the background and development of \eqref{eq:1.03}--\eqref{eq:1.04}, we refer the interested reader to \cite{2007grochowski,2007kornyshev}.
Moreover, when ions and solvent molecules may have different sizes, the associated PB equations become
\begin{align}
\label{eq:1.05}
&-\nabla\cdot(\varepsilon\nabla\phi)=4\pi\rho_0+4\pi\sum_{i=1}^Nz_ie_0c_i(\phi),\\
\label{eq:1.06}
&k_BT\ln(c_i)-k_BT\frac{v_i}{v_0}\ln(c_0)+z_ie_0\phi=\bar{\mu}_i\quad\text{for}~i=1,\dots,N,
\\
\label{eq:1.07}
&\sum_{i=0}^Nv_ic_i=1,
\end{align}
where $\displaystyle c_0=\frac1{v_0}(1-\sum\limits_{i=1}^Nv_ic_i)$ is the concentration of solvent molecules with the volume $v_0$ and the valence $z_0=0$, $v_i$ is the volume of $i$th ion species, and $\bar\mu_i$ is the associated chemical potential for $i=1,\dots,N$ (cf. \cite{2009li,2009li2,2013li,2011lu}).
Equations \eqref{eq:1.01}--\eqref{eq:1.07} can be denoted as
\begin{align}
&\label{eq:1.08}-\nabla\cdot(\varepsilon\nabla\phi)=4\pi\rho_0+4\pi\sum_{i=1}^Nz_ie_0c_i,\\&
\label{eq:1.09}
\mu_i=k_BT\ln(c_i)+z_ie_0\phi+\mu_i^{\text{ex}}+U_i^{\text{wall}}\quad\text{for}~i=0,1,\dots,N,
\end{align}
with different $\mu_i$ and $\mu_i^{\text{ex}}$, where $\mu_i$ is the chemical potential, $\mu_i^{\text{ex}}$ is the excess chemical potential which describes the interaction potential of ions and solvent molecules. Besides, $U_i^{\text{wall}}$ is the potential which comes from the interactions of ions and solvent molecules with the wall. Under $U_i^{\text{wall}}=0$ and conditions of $\mu_i$ and $\mu_i^{\text{ex}}$, equations \eqref{eq:1.08}--\eqref{eq:1.09} become the following equations.
\begin{itemize}
\item \eqref{eq:1.01}--\eqref{eq:1.02} if $\mu_i=k_BT\ln(c_i^\text b)$ and $\mu_i^{\text{ex}}=0$ for $i=1,\cdots,N$.
\item \eqref{eq:1.03}--\eqref{eq:1.04} if $\mu_i=k_BT\ln(c_i^\text b)$ and $\displaystyle\mu_i^{\text{ex}}=k_BT\ln\left(1-\gamma+\frac\gamma{z_1-z_2}(c_1^{\text b}e^{-z_1e_0\phi/k_BT}+c_2^{\text b}e^{-z_2e_0\phi/k_BT})\right)$ for $i=1,2$.
\item \eqref{eq:1.05}--\eqref{eq:1.07} if $\mu_i=\bar\mu_i$, $\mu_i^{\text{ex}}=-k_BT\frac{v_i}{v_0}\ln(c_0)$ for $i=1,\cdots,N$.
\end{itemize}

The steric interactions of ions can produce oscillations in concentration (density) profiles but such oscillations cannot be obtained by \eqref{eq:1.01}--\eqref{eq:1.04} (cf. \cite{2015gillespie,2010kalcher,2006NGjcp,1992TSDjcp}).
In \cite{2011zhou}, the oscillatory concentrations can be found in \eqref{eq:1.05}--\eqref{eq:1.07}, so it is natural to ask if there exists oscillatory total ionic charge density.
However, the total ionic charge density $\sum\limits_{i=1}^Nz_ic_i(\phi)$ of \eqref{eq:1.01}--\eqref{eq:1.07} is decreasing to $\phi$ (see \cite{2013li} and Propositions~\ref{prop:2.06} and~\ref{prop:2.10} below).
This motivates us to derive a general model of PB equations, called the Poisson--Boltzmann equations with steric effects (PB-steric equations), which not only include \eqref{eq:1.01}--\eqref{eq:1.07} but also have oscillatory total ionic charge density $\sum\limits_{i=1}^Nz_ic_i(\phi)$ under different assumptions of steric effects and chemical potentials.

To get the PB-steric equations, we consider to use the Lennard-Jones (LJ) potential which is a well-known model for the interaction between a pair of ions (and solvent molecules) and is important in the density functional theory and molecular dynamics simulation (cf. \cite{1993balbuena,1991kierlik,2012Lee,2015leimkuhler,1995rapaport}).
We set the energy functional $\iint_{\mathbb R^d}\psi(x-y)c_i(x)c_j(y)\,\mathrm dx\,\mathrm dy$ to describe the energy of steric repulsion of $c_i$ and $c_j$ for $i,j=0,1,\dots,N$.
Here $\psi(z)=|z|^{-12}$ comes from the repulsive part of LJ potential with strong singularity at the origin which makes the energy functional $\iint_{\mathbb R^d}\!\psi(x-y)c_i(x)c_j(y)\,\mathrm dx\,\mathrm dy$ hard to study.
Hence we replace the LJ potential by the approximate LJ potentials (cf. \cite{2012horng,2014lin}) to describe the steric repulsion of ions and solvent molecules and we obtain the following PB-steric equations:
\begin{align}
\label{eq:1.10}
&-\nabla\cdot(\varepsilon\nabla\phi)=4\pi\rho_0+4\pi\sum_{i=1}^Nz_ie_0c_i,
\\
\label{eq:1.11}&k_BT\ln(c_i)+z_ie_0\phi+\Lambda\sum_{j=0}^Ng_{ij}c_j=\Lambda\tilde\mu_i+\hat\mu_i\quad\text{for}~i=0,1,\dots,N,
\end{align}
where $c_0$ is the concentration of solvent molecules, $c_i$ is the concentration of $i$th ion species, and $\Lambda g_{ij}\geq0$ represents the strength of steric repulsion between the $i$th and $j$th ions (or solvent molecules).
One may see Appendix~\ref{Appendix.A} for the derivation of \eqref{eq:1.10}--\eqref{eq:1.11} when matrix $(g_{ij})$ is symmetric.

In this paper, we generalize \eqref{eq:1.10}--\eqref{eq:1.11} to all matrices $(g_{ij})$ (which may include nonsymmetric matrices) such that \eqref{eq:1.11} has a unique solution $c_i=c_i(\phi)$ for $\phi\in\mathbb R$ and $i=0,1,\dots,N$.
The nonsymmetry of matrix $(g_{ij})$ may come from the potential $U_i^{\text{wall}}$ which describes how the ions and solvent molecules interact with the wall.
Note that \eqref{eq:1.10}--\eqref{eq:1.11} can be expressed by \eqref{eq:1.08}--\eqref{eq:1.09} with the chemical potential $\mu_i=\Lambda\tilde\mu_i+\hat\mu_i$ and the excess chemical potential $\mu_i^{\text{ex}}$ and the potential $U_i^{\text{wall}}$ satisfying $\mu_i^{\text{ex}}+U_i^{\text{wall}}=\Lambda\sum\limits_{j=0}^Ng_{ij}c_j$ for $i=0,1,\dots,N$.
For the sake of simplify, we set $k_BT=e_0=1$ and write $4\pi\varepsilon$ instead of $\varepsilon$ so \eqref{eq:1.10}--\eqref{eq:1.11} become
\begin{align}
\label{eq:1.12}
&-\nabla\cdot\left(\varepsilon\nabla\phi\right)=\rho_0+\sum_{i=1}^Nz_ic_i\quad\text{in}~\Omega,\\
\label{eq:1.13}
&\ln(c_i)+z_i\phi+\Lambda\sum_{j=0}^Ng_{ij}c_j=\Lambda\tilde\mu_i+\hat\mu_i\quad\text{for}~i=0,1,\dots,N,
\end{align}
where $\Omega$ is a bounded smooth domain in $\mathbb R^d$ ($d\geq2$).
Under suitable conditions of $g_{ij}$ such that system \eqref{eq:1.13} has a unique solution $c_i=c_i(\phi)$ (which may depend on parameter $\Lambda$) for $\phi\in\mathbb R$ and $i=0,1,\dots,N$, \eqref{eq:1.12} becomes a nonlinear elliptic equation
\begin{equation}
\label{eq:1.14}
-\nabla\cdot(\varepsilon\nabla\phi)=\rho_0+\sum_{i=1}^Nz_ic_i(\phi)\quad\text{in}~\Omega.
\end{equation}
Then for all possible $g_{ij}\geq0$, equation \eqref{eq:1.14} and its limiting equation (as $\Lambda\to\infty$) are called as the PB-steric equations.
Hereafter, the boundary condition of \eqref{eq:1.14} is considered as the Robin boundary condition
\begin{equation}
\label{eq:1.15}
\phi+\eta\frac{\partial\phi}{\partial\nu}=\phi_{bd}\quad\text{on}~\partial\Omega,
\end{equation}
where $\phi_{bd}\in\mathcal C^2(\partial\Omega)$ is the extra electrostatic potential and $\eta\geq0$ is a constant related to the surface dielectric constant (cf. \cite{2005bazant,2007mori,2010ziebert}).

The strength of steric effect is determined by $\Lambda g_{ij}$'s in the PB-steric equations \eqref{eq:1.13}--\eqref{eq:1.14}.
As $\Lambda=0$ (or $g_{ij}=0$), \eqref{eq:1.13} has the same form as \eqref{eq:1.02} and the PB-steric equations \eqref{eq:1.13}--\eqref{eq:1.14} become the conventional PB equations \eqref{eq:1.01}--\eqref{eq:1.02}.
On the other hand, when $g_{ij}$'s are positive constants, a larger $\Lambda$ produces stronger steric repulsion. This motivates us to expect that as $\Lambda$ tends to infinity, the steric repulsion becomes extremely strong so that volume exclusion holds true and ions can be described by the lattice gas model as for the mean-field approximation of the modified PB equations \eqref{eq:1.03}--\eqref{eq:1.04} (cf. \cite{1997borukhov,2007grochowski}).
To justify this, we prove that as $\Lambda\to\infty$, the PB-steric equations \eqref{eq:1.13}--\eqref{eq:1.14} may approach to the modified PB equations \eqref{eq:1.03}--\eqref{eq:1.04} (see Theorem~\ref{thm:1.1} and Remark \ref{rmk1}).
Moreover, we prove that as $\Lambda\to\infty$, the PB-steric equations \eqref{eq:1.13}--\eqref{eq:1.14} may approach to the modified PB equations \eqref{eq:1.05}--\eqref{eq:1.07} (see Theorem~\ref{thm:1.5} and Remark~\ref{rmk:1.6}).

In order to obtain equations \eqref{eq:1.03}--\eqref{eq:1.04}, we need the assumptions of steric effects ($g_{ij}$) and chemical potentials ($\tilde\mu_i$) given by
\begin{itemize}
\item[(A1)] $g_{i0}=g_{00}=1-\gamma$ and $g_{ij}=g_{0j}=\gamma/Z$ for $i,j=1,\dots,N$,
\item[(A2)] $\tilde\mu_i=\tilde\mu_0$ for $i=1,\dots,N$,
\end{itemize}
where $0\leq\gamma\leq1$, $\tilde\mu_0>0$, $\hat\mu_0$ (in \eqref{eq:1.13}) and $Z=\sum\limits_{i=1}^N|z_i|$ are constants independent of $\Lambda$.
Then by (A1) and (A2), system \eqref{eq:1.13} has a unique solution $c_{i,\Lambda}=c_{i,\Lambda}(\phi)$ for $\phi\in\mathbb R$ and $i=0,1,\dots,N$, which satisfies
\begin{align}
\label{eq:1.16}&c_{i,\Lambda}=c_{0,\Lambda}\exp(\bar\mu_i-z_i\phi)\quad\text{for}~\phi\in\mathbb R~\text{and}~i=1,\dots,N,\\&
\label{eq:1.17}
\ln(c_{0,\Lambda})+\Lambda H(\gamma,z_1,\dots,z_N,\phi)c_{0,\Lambda}=\Lambda\tilde\mu_0+\hat\mu_0\quad\text{for}~\phi\in\mathbb R,\\&
\label{eq:1.18}H(\gamma,z_1,\dots,z_N,\phi)=1-\gamma+\frac\gamma Z\sum_{j=1}^Ne^{\bar\mu_j-z_j\phi},
\end{align}
where $\bar\mu_i=\hat\mu_i-\hat\mu_0$ is a constant independent of $\Lambda$ for $i=1,\dots,N$.
Note that \eqref{eq:1.17} can be solved in terms of the principal branch of Lambert W function (cf. \cite{2022mezo}), which implies that $c_{i,\Lambda}$ are positive smooth functions for $i=0,1,\dots,N$ (see Proposition~\ref{prop:2.01}).
Recall that the Lambert W function $W_0(x)$ can be defined by $W_0(x)e^{W_0(x)}=x$ for all $x\geq e^{-1}$, and the range of $W_0(x)$ is $[-1,\infty)$.
Hence the PB-steric equation \eqref{eq:1.14} can be expressed as
\begin{equation}
\label{eq:1.19}
-\nabla\cdot(\varepsilon\nabla\phi_\Lambda)=\rho_0+f_{\Lambda}(\phi_\Lambda)\quad\text{in}~\Omega,
\end{equation}
where function $f_\Lambda=f_\Lambda(\phi)$ is denoted as
\begin{equation}
\label{eq:1.20}
f_{\Lambda}(\phi)=\sum_{i=1}^Nz_ic_{i,\Lambda}(\phi)\quad\text{for}~\phi\in\mathbb R.
\end{equation}
By the standard method of a nonlinear elliptic equation, we may obtain the existence and uniqueness of \eqref{eq:1.19} with the Robin boundary condition \eqref{eq:1.15}.

Using the implicit function theorem on Banach spaces (cf. \cite[Theorem 15.1]{1973deimling}), we prove that $c_{i,\Lambda}$ converges to $c_i^*$ in space $\mathcal C^m[a,b]$ as $\Lambda$ tends to infinity for $m\in\mathbb N$ and $a<b$, where $c_i^*$ satisfies
\begin{equation}\label{eq:1.21}
c_i^*(\phi)=\frac{\tilde\mu_0e^{\bar\mu_i-z_i\phi}}{H(\gamma,z_1,\dots,z_N,\phi)}\quad\text{for}~\phi\in\mathbb R~\text{and}~i=0,1,\dots,N.
\end{equation}
Thus function $f_\Lambda$ also converges to $f^*$ in space $\mathcal C^m[a,b]$ as $\Lambda$ goes to infinity for $m\in\mathbb N$ and $a<b$, where
\begin{equation}
\label{eq:1.22}
f^*(\phi)=\sum_{i=1}^Nz_ic_i^*(\phi)\quad\text{for}~\phi\in\mathbb R\end{equation}
(see Corollary~\ref{coro:2.05}).
Moreover, for the asymptotic limit of \eqref{eq:1.19}, we obtain the following theorem.
\begin{thm}\label{thm:1.1}
Let $\Omega\subset\mathbb R^d$ be a bounded smooth domain, $\varepsilon\in\mathcal C^\infty(\overline\Omega)$ be a  positive function, $\rho_0\in\mathcal C^\infty(\overline\Omega)$, $\phi_{bd}\in\mathcal C^2(\partial\Omega)$, $z_0=0$, and $z_iz_j<0$ for some $i,j\in\{1,\dots,N\}$. Assume that (A1) and (A2) hold true. Then the solution $\phi_\Lambda$ of PB-steric equation \eqref{eq:1.19} with the Robin boundary condition \eqref{eq:1.15} satisfies
\[\lim_{\Lambda\to\infty}\|\phi_\Lambda-\phi^*\|_{\mathcal C^m(\overline\Omega)}=0\quad\text{for}~m\in\mathbb N,
\]
where $\phi^*$ is the solution of \begin{equation}
\label{eq:1.23}
-\nabla\cdot(\varepsilon\nabla\phi^*)=\rho_0+f^*(\phi^*)\quad\text{in}~\Omega
\end{equation} with the Robin boundary condition \eqref{eq:1.15}.
\end{thm}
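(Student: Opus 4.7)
The plan is to obtain $\phi^*$ as a compactness limit of $\phi_\Lambda$ in $\mathcal C^m(\overline\Omega)$. The main ingredients are: (i) a $\Lambda$-uniform $L^\infty$ bound on $\phi_\Lambda$; (ii) Schauder theory for the linear Robin problem to upgrade this to uniform $\mathcal C^{2,\alpha}$ bounds; (iii) the locally uniform convergence $f_\Lambda\to f^*$ supplied by Corollary~\ref{coro:2.05}, which allows passage to the limit in \eqref{eq:1.19}; and (iv) a bootstrap on $w_\Lambda:=\phi_\Lambda-\phi^*$ to reach arbitrary $\mathcal C^m$.

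\emph{Step 1 (uniform $L^\infty$ bound).} First I would show $\|\phi_\Lambda\|_{L^\infty(\Omega)}\leq M$ with $M$ independent of $\Lambda$. Using \eqref{eq:1.16}--\eqref{eq:1.17} together with the hypothesis $z_iz_j<0$, one checks that $f_\Lambda$ is strictly decreasing on $\mathbb R$ for every $\Lambda\geq 0$, in the spirit of Proposition~\ref{prop:2.10}. The standard comparison principle for the Robin problem applied to \eqref{eq:1.19}--\eqref{eq:1.15} then controls $\max\phi_\Lambda$ by either $\|\phi_{bd}\|_{L^\infty(\partial\Omega)}$ or by an expression of the form $f_\Lambda^{-1}(-\rho_0(x))$ at an interior extremum, and symmetrically for $\min\phi_\Lambda$. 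The $\Lambda$-uniformity follows from Corollary~\ref{coro:2.05}: $f_\Lambda\to f^*$ locally uniformly, and $f^*$ has a globally bounded range (from the saturation $c_i^*\leq Z\tilde\mu_0/\gamma$), so the relevant thresholds remain uniformly bounded in $\Lambda$ for $\Lambda$ large.

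\emph{Steps 2--3 (regularity and passage to the limit).} With $|\phi_\Lambda|\leq M$, Corollary~\ref{coro:2.05} gives $\|f_\Lambda(\phi_\Lambda)\|_{L^\infty(\Omega)}\leq C$ uniformly. Standard $W^{2,p}$ and Schauder estimates for the smooth-coefficient Robin problem yield $\|\phi_\Lambda\|_{\mathcal C^{2,\alpha}(\overline\Omega)}\leq C$ for some $\alpha\in(0,1)$, independent of $\Lambda$. Arzel\`a--Ascoli then extracts from any sequence $\Lambda_k\to\infty$ a subsequence converging in $\mathcal C^2(\overline\Omega)$ to some $\phi^\dagger$, and since $f_{\Lambda_k}\to f^*$ on $[-M,M]$ and $\phi_{\Lambda_k}\to\phi^\dagger$ uniformly, $f_{\Lambda_k}(\phi_{\Lambda_k})\to f^*(\phi^\dagger)$ in $\mathcal C^0(\overline\Omega)$. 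Passing to the limit in \eqref{eq:1.19}--\eqref{eq:1.15}, $\phi^\dagger$ solves \eqref{eq:1.23} with the Robin boundary condition. Strict monotonicity of $f^*$ makes this limit problem uniquely solvable, so $\phi^\dagger=\phi^*$, and the whole family converges in $\mathcal C^2(\overline\Omega)$.

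\emph{Step 4 (bootstrap to $\mathcal C^m$).} A mean-value decomposition recasts the equation for $w_\Lambda$ as the linear Robin problem
\[
-\nabla\cdot(\varepsilon\nabla w_\Lambda) - a_\Lambda(x)\, w_\Lambda = (f_\Lambda - f^*)(\phi_\Lambda)\quad\text{in } \Omega, \qquad w_\Lambda + \eta\,\partial_\nu w_\Lambda = 0\quad\text{on } \partial\Omega,
\]
with $a_\Lambda(x)=\int_0^1 (f^*)'(\phi^*+t w_\Lambda)\,\mathrm dt\leq 0$ and right-hand side tending to $0$ in $\mathcal C^m(\overline\Omega)$ by Corollary~\ref{coro:2.05} combined with uniform $\mathcal C^m$ bounds on $\phi_\Lambda$ (themselves obtained inductively via elliptic regularity applied to \eqref{eq:1.19}). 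Iterated Schauder estimates promote the $\mathcal C^{2,\alpha}$ convergence already established to $\mathcal C^{m,\alpha}$ for every $m\in\mathbb N$. The principal technical obstacle is Step~1: a $\Lambda$-uniform $L^\infty$ bound is prerequisite for invoking Corollary~\ref{coro:2.05} (which controls $c_{i,\Lambda}$ only on bounded intervals of $\phi$), so one must combine the monotonicity of $f_\Lambda$ valid for every $\Lambda\geq 0$ with the uniform boundedness of $f^*$ in order to confine $\phi_\Lambda$ to an interval independent of $\Lambda$.
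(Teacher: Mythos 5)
Your outline matches the paper's architecture (uniform $L^\infty$ bound, elliptic regularity, compactness plus uniqueness of the limit problem, Schauder bootstrap to $\mathcal C^m$), and Steps~2--4 are essentially identical to the paper's Section~3.2. The problem is Step~1, and it is not a minor one: the paper explicitly flags, at the start of Section~3, that ``one cannot simply use the maximum principle on \eqref{eq:1.19}'' to get the uniform $L^\infty$ bound when $\rho_0\not\equiv0$, which is precisely what you do.

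Concretely, at an interior maximum $x_0$ of $\phi_\Lambda$ your argument gives $f_\Lambda(\phi_\Lambda(x_0))\geq-\rho_0(x_0)\geq-\max_{\overline\Omega}\rho_0$, hence $\phi_\Lambda(x_0)\leq f_\Lambda^{-1}(-\max\rho_0)$. For this threshold to be bounded uniformly in $\Lambda$ you would need $-\max\rho_0>m^*$, where $m^*=\lim_{\phi\to\infty}f^*(\phi)<0$ from Proposition~\ref{prop:2.06}(ii). But if $\rho_0$ is large enough that $-\max\rho_0\leq m^*$, then since $f_\Lambda\to f^*$ only \emph{locally} uniformly (Corollary~\ref{coro:2.05}) and $f^*(\phi)>m^*$ for all $\phi$, one has $f_\Lambda^{-1}(-\max\rho_0)\to+\infty$ as $\Lambda\to\infty$. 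Your appeal to ``$f^*$ has a globally bounded range'' actually shows the opposite of what you need: because $f^*$ saturates, the level set $\{f_\Lambda=-\max\rho_0\}$ escapes to infinity. The paper circumvents this by introducing the $\Lambda$-independent auxiliary $\psi$ solving $-\nabla\cdot(\varepsilon\nabla\psi)=\rho_0$ with homogeneous Robin data, and bounding $\bar\phi_\Lambda:=\phi_\Lambda-\psi$, which satisfies $-\nabla\cdot(\varepsilon\nabla\bar\phi_\Lambda)=f_\Lambda(\phi_\Lambda)$. At an interior maximum of $\bar\phi_\Lambda$ the sign condition is then $f_\Lambda(\phi_\Lambda(x_0))\geq0$, and $0$ always lies in the open interval $(m^*,M^*)$ because $m^*<0<M^*$; so the associated threshold is uniformly bounded in $\Lambda$, and the $L^\infty$ bound follows. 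Without this subtraction your Step~1 does not cover the stated hypotheses, which place no smallness restriction on $\rho_0$. (Minor: the monotonicity of $f_\Lambda$ you invoke is Proposition~\ref{prop:2.01}(ii), not \ref{prop:2.10}, which concerns $\tilde f^*$ under (A3)--(A4).)
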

\begin{rmk}\label{rmk1}
When $N=2$, \eqref{eq:1.21}--\eqref{eq:1.23} become \eqref{eq:1.03}--\eqref{eq:1.04}.
Besides, \eqref{eq:1.23} is the limiting equation of \eqref{eq:1.19} as $\Lambda\to\infty$.
This shows the PB-steric equations include the modified PB equations \eqref{eq:1.03}--\eqref{eq:1.04}.
\end{rmk}
\begin{rmk}
\label{rmk2}When $N=2$, we replace the assumption (A1) by
\begin{itemize}
\item[(A1)]\!\!$'$ $g_{i0}=g_{00}=1-\gamma$, $g_{ij}=g_{0j}=\gamma_j$ for $i,j=0,1,2$, and $\gamma=\gamma_1+\gamma_2$.
\end{itemize}
Then by (A1)\,$'$ and (A2), system \eqref{eq:1.13} has a unique solution $c_{i,\Lambda}=c_{i,\Lambda}(\phi)$ for $\phi\in\mathbb R$ and $i=0,1,2$.
Due to $z_1z_2<0$, we can calculate directly to get
$\mathrm df_\Lambda/\mathrm d\phi<0$, which implies function $f_\Lambda$ is decreasing to $\phi$.
Hence one can follow the similar argument in sections~\ref{sec:2} and \ref{sec:3} to obtain the same conclusion of Theorem~\ref{thm:1.1}.
\end{rmk}
\begin{rmk}When $N\geq2$, we replace the assumption (A1) by
\begin{itemize}
\item[(A1)]\!\!$''$ $g_{i0}=g_{00}=1-\gamma$, and $g_{ij}=g_{0j}=\gamma|z_j|/Z$ for $i,j=1,\dots,N$.
Then by (A1)\!$''$ and (A2), system \eqref{eq:1.13} has a unique solution $c_{i,\Lambda}=c_{i,\Lambda}(\phi)$ for $\phi\in\mathbb R$ and $i=0,1,\dots,N$.
Since $z_iz_j<0$ for some $i,j\in\{1,\dots,N\}$, we can calculate directly to get $\mathrm df_\Lambda/\mathrm d\phi<0$, which implies function $f_\Lambda$ is decreasing to $\phi$. 
Hence one can follow the similar argument in sections~\ref{sec:2} and \ref{sec:3} to obtain the same conclusion of Theorem~\ref{thm:1.1}.
\end{itemize}
\end{rmk}
\begin{rmk}
\label{rmk3}
As $N=3$, we replace the assumption (A1) by
\begin{itemize}
\item[(A1)]\!\!$'''$ $g_{i0}=g_{00}=1-\gamma$, $g_{ij}=g_{0j}=\gamma_j$ for $i,j=0,1,2,3$ and $\gamma=\gamma_1+\gamma_2+\gamma_3$.
\end{itemize}
Then by (A1)\,$'''$ and (A2), system \eqref{eq:1.13} has a unique solution $c_{i,\Lambda}=c_{i,\Lambda}(\phi)$ for $\phi\in\mathbb R$ and $i=0,1,2,3$.
But for some $\Lambda$, function $f_\Lambda$ and $f_\Lambda\circ\phi_\Lambda$ (total ionic charge density) may become oscillatory (see Figure~\ref{fig1}).
One can see the proof in Appendix~\ref{Appendix.B} and numerical methods in section~\ref{sec:4}.
Such oscillatory total ionic charge density $f_\Lambda=f_\Lambda(\phi)$ cannot be obtained in the conventional and modified PB equations \eqref{eq:1.01}--\eqref{eq:1.07}.
\begin{figure}\includegraphics[scale=0.6]{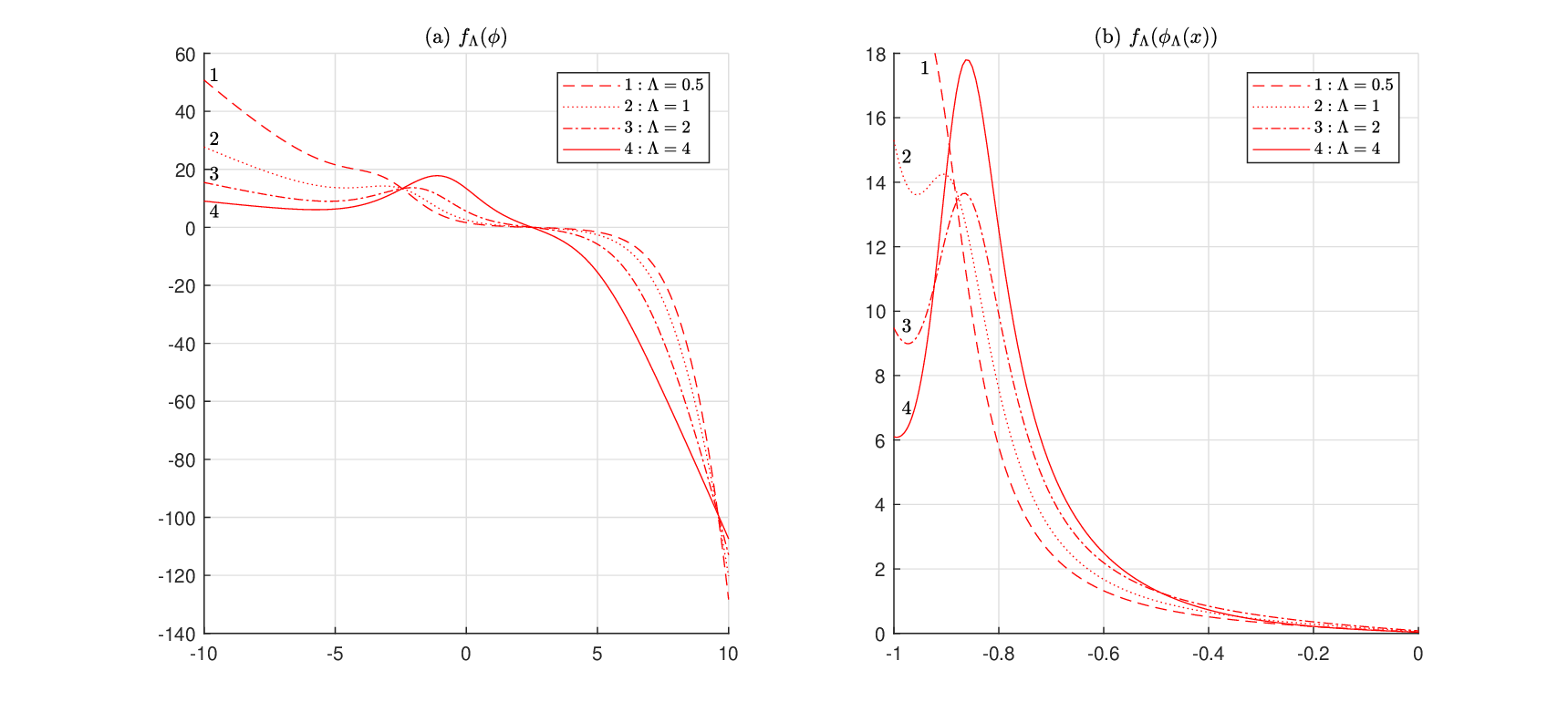}
\caption{The profiles of $f_\Lambda(\phi)$ and $f_\Lambda(\phi_\Lambda(x))$ under assumptions (A1)$'''$ and (A2).
In (a), curves 1--4 are profiles of function $f_\Lambda=\sum_{i=1}^3z_ic_{i,\Lambda}$ with $\Lambda=0.5$, $1$, $2$, $4$. In (b), curves 1--4 are the profiles of function $f_\Lambda\circ\phi_\Lambda$ with $\Lambda=0.5$, $1$, $2$, $4$.}
\label{fig1}
\end{figure}
\end{rmk}

To obtain \eqref{eq:1.05}--\eqref{eq:1.07}, we need the assumptions of $g_{ij}$ and $\tilde\mu_i$ given by
\begin{itemize}
\item[(A3)] $g_{ij}=\lambda_i\lambda_j$ for $i,j=0,1,\dots,N$,
\item[(A4)] $\tilde\mu_i=\lambda_i\tilde\mu_0$ for $i=0,1,\dots,N$,
\end{itemize}
where $\lambda_i>0$, $\tilde\mu_0>0$ and $\hat\mu_i$ (in \eqref{eq:1.13}) are constants independent of $\Lambda$.
Here $\tilde\mu_0$ is replaced by $\lambda_0\tilde\mu_0$ and $\lambda_0$ might not be $1$.
By (A3) and (A4), system \eqref{eq:1.13} has a unique solution $c_{i,\Lambda}=c_{i,\Lambda}(\phi)$ for $\phi\in\mathbb R$ and $i=0,1,\dots,N$, which satisfies
\begin{align}
\label{eq:1.24}
&c_{i,\Lambda}=(c_{0,\Lambda})^{\lambda_i/\lambda_0}e^{\bar\mu_i-z_i\phi}\quad\text{for}~\phi\in\mathbb R~\text{and}~i=1,\dots,N,\\&
\label{eq:1.25}
\ln(c_{0,\Lambda})+\Lambda\sum_{j=0}^N\lambda_0\lambda_j(c_{0,\Lambda})^{\lambda_j/\lambda_0}e^{\bar\mu_j-z_j\phi}=\Lambda\lambda_0\tilde\mu_0+\hat\mu_0\quad\text{for}~\phi\in\mathbb R,
\end{align}
where $\bar\mu_i=\hat\mu_i-\frac{\lambda_i}{\lambda_0}\hat\mu_0$ is a constant independent of $\Lambda$ for $i=0,1,\dots,N$.
Note that as $\lambda_j/\lambda_0=1$ for $j=1,\dots,N$, \eqref{eq:1.25} can be solved by the Lambert W function, but here some $\lambda_j/\lambda_0$ may not be equal to one so we may call $c_{0,\Lambda}$ as a Lambert-type function.
Then we apply the implicit function theorem on \eqref{eq:1.25} and obtain that $c_{0,\Lambda}(\phi)$ is a positive smooth function (see Proposition~\ref{prop:2.07}).
Hence the PB-steric equation \eqref{eq:1.14} can be expressed as
\begin{equation}
\label{eq:1.26}
-\nabla\cdot(\varepsilon\nabla\phi_\Lambda)=\rho_0+\tilde f_\Lambda(\phi_\Lambda)\quad\text{in}~\Omega,
\end{equation}
where function $\tilde f_\Lambda=\tilde f_\Lambda(\phi)$ is denoted as
\begin{equation}
\label{eq:1.27}
\tilde f_\Lambda(\phi)=\sum_{i=1}^Nz_ic_{i,\Lambda}(\phi)\quad\text{for}~\phi\in\mathbb R.\end{equation}
Note that the existence and uniqueness of \eqref{eq:1.26} with the Robin boundary condition \eqref{eq:1.15} can be obtain by the standard method of the nonlinear elliptic equation, but \eqref{eq:1.19} and \eqref{eq:1.26} are different because they have different nonlinear terms $f_\Lambda$ and $\tilde f_\Lambda$.

By the implicit function theorem on Banach spaces (cf. \cite[Theorem 15.1]{1973deimling}), we prove that $c_{i,\Lambda}$ converges to $c_i^*$ in space $\mathcal C^m[a,b]$ as $\Lambda$ tends to infinity for $m\in\mathbb N$ and $a<b$.
Here function $c_i^*$ satisfies
\begin{align}
\label{eq:1.28}
&c_i^*(\phi)=(c_0^*(\phi))^{\lambda_i/\lambda_0}e^{\bar\mu_i-z_i\phi}>0\quad\text{for}~\phi\in\mathbb R~\text{and}~i=1,\dots,N,\\&
\label{eq:1.29}
\sum_{i=0}^N\lambda_ic_i^*(\phi)=\sum_{i=0}^N\lambda_i(c_0^*(\phi))^{\lambda_i/\lambda_0}e^{\bar\mu_i-z_i\phi}=\tilde\mu_0\quad\text{for}~\phi\in\mathbb R,
\end{align}
where $\bar\mu_i=\hat\mu_i-\frac{\lambda_i}{\lambda_0}\hat\mu_0$ for $i=0,1,\dots,N$.
Thus function $\tilde f_\Lambda$ also converges to $\tilde f^*$ in space $\mathcal C^m[a,b]$ as $\Lambda$ goes to infinity for $m\in\mathbb N$ and $a<b$, where
\begin{equation}
\label{eq:1.30}\tilde f^*(\phi)=\sum_{i=1}^Nz_ic_i^*(\phi)\end{equation}
(see Corollary~\ref{coro:2.09}).
Moreover, for the asymptotic limit of \eqref{eq:1.26}, we have the following theorem.
\begin{thm}\label{thm:1.5}
Let $\Omega\subset\mathbb R^d$ be a bounded smooth domain, $\varepsilon\in\mathcal C^\infty(\overline\Omega)$ be a  positive function, $\rho_0\in\mathcal C^\infty(\overline\Omega)$, $\phi_{bd}\in\mathcal C^2(\partial\Omega)$, $z_0=0$, and $z_iz_j<0$ for some $i,j\in\{1,\dots,N\}$. Assume that (A3) and (A4) hold true. Then the solution $\phi_\Lambda$ of PB-steric equation \eqref{eq:1.26} with the Robin boundary condition \eqref{eq:1.15} satisfies
\[\lim_{\Lambda\to\infty}\|\phi_\Lambda-\phi^*\|_{\mathcal{C}^m(\overline{\Omega})}=0\quad\text{for}~m\in\mathbb N,
\]where $\phi^*$ is the solution of\begin{equation}
\label{eq:1.31}
-\nabla\cdot(\varepsilon\nabla\phi^*)=\rho_0+\tilde f^*(\phi^*)\quad\text{in}~\Omega
\end{equation} with the Robin boundary condition \eqref{eq:1.15}.
\end{thm}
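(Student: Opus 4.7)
The plan is to follow the same scheme as for Theorem~\ref{thm:1.1}: establish uniform a priori bounds on $\{\phi_\Lambda\}$, extract a convergent subsequence, identify its limit as the unique solution $\phi^\ast$ of~\eqref{eq:1.31}, and upgrade to full convergence via uniqueness.

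First I would obtain a uniform $L^\infty$ estimate for $\phi_\Lambda$ on $\overline\Omega$. At a boundary maximum, the Robin condition together with $\partial\phi_\Lambda/\partial\nu\geq 0$ gives $\phi_\Lambda\leq\|\phi_{bd}\|_{L^\infty(\partial\Omega)}$, and at an interior maximum $x^\ast$ one has $-\nabla\cdot(\varepsilon\nabla\phi_\Lambda)(x^\ast)\geq 0$, hence $\tilde f_\Lambda(\phi_\Lambda(x^\ast))\geq -\|\rho_0\|_{L^\infty(\Omega)}$. Because $z_iz_j<0$ for some $i,j$, differentiating~\eqref{eq:1.24}--\eqref{eq:1.25} implicitly in $\phi$ yields $\mathrm d\tilde f_\Lambda/\mathrm d\phi<0$; moreover $\tilde f_\Lambda\to\tilde f^\ast$ uniformly on compact sets, and $\tilde f^\ast$ is strictly decreasing with a strictly negative limit at $+\infty$ and a strictly positive limit at $-\infty$ (readable off~\eqref{eq:1.28}--\eqref{eq:1.29}). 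Combining these, for all sufficiently large $\Lambda$ the preimage $\tilde f_\Lambda^{-1}\bigl([-\|\rho_0\|_{L^\infty},\infty)\bigr)$ lies in a half-line $(-\infty,R]$ with $R$ independent of $\Lambda$, which gives the upper bound on $\phi_\Lambda$; the lower bound is obtained symmetrically at a minimum.

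With $\phi_\Lambda$ confined to a fixed interval $[a,b]$, Corollary~\ref{coro:2.09} gives $\tilde f_\Lambda\to\tilde f^\ast$ in $\mathcal C^m[a,b]$ for every $m$. The chain rule then controls $\rho_0+\tilde f_\Lambda(\phi_\Lambda)$ in $\mathcal C^{m,\alpha}(\overline\Omega)$ in terms of bounds on $\phi_\Lambda$, and standard Schauder estimates for the linear elliptic problem with Robin data produce uniform estimates $\|\phi_\Lambda\|_{\mathcal C^{m+2,\alpha}(\overline\Omega)}\leq C_m$. Arzel\`a--Ascoli yields, along any sequence $\Lambda_n\to\infty$, a subsequence converging in $\mathcal C^{m+2}(\overline\Omega)$ to some $\phi^\ast$. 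Passing to the limit term by term in~\eqref{eq:1.26}, using the $\mathcal C^m[a,b]$ convergence of $\tilde f_{\Lambda_n}$ together with the uniform convergence of $\phi_{\Lambda_n}$, shows $\phi^\ast$ solves~\eqref{eq:1.31} with the Robin condition~\eqref{eq:1.15}. The strict monotonicity of $\tilde f^\ast$ renders the limit equation monotone, so its solution is unique; hence $\phi^\ast$ is independent of the chosen subsequence and $\phi_\Lambda\to\phi^\ast$ in $\mathcal C^m(\overline\Omega)$ for every $m\in\mathbb N$.

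The main obstacle I expect is the uniform $L^\infty$ bound. Equation~\eqref{eq:1.25} is a Lambert-\emph{type} relation with exponents $\lambda_i/\lambda_0$ that need not all equal $1$, so the asymptotic behaviour of $c_{0,\Lambda}(\phi)$ and hence of $\tilde f_\Lambda(\phi)$ as $\phi\to\pm\infty$ is less transparent than in the genuine Lambert setting of Theorem~\ref{thm:1.1}. Pinning down uniform (in $\Lambda$) upper and lower controls on $c_{0,\Lambda}(\phi)$ that are strong enough to force the required sign of $\tilde f_\Lambda$ for large $|\phi|$ is where the assumption $z_iz_j<0$ and the smooth dependence furnished by Proposition~\ref{prop:2.07} really enter, and is the most delicate piece of the argument.
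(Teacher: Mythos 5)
Your overall scheme (uniform a priori bound, local Schauder bootstrap, subsequence extraction, identification of the limit, uniqueness from strict monotonicity of $\tilde f^*$) is the same one the paper uses, and the self-assessment that the Lambert-type relation~\eqref{eq:1.25} with unequal exponents $\lambda_i/\lambda_0$ is the delicate part matches the paper, which for that reason delegates the proof of Lemma~\ref{lma:3.4} to an external reference.

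There is, however, a genuine gap in the $L^\infty$ estimate, and it is precisely the one the paper warns about at the start of Section~\ref{sec:3}. Working directly with $\phi_\Lambda$, at an interior maximum you obtain $\tilde f_\Lambda(\phi_\Lambda(x^*))\geq -\|\rho_0\|_{L^\infty}$ and then assert that the preimage $\tilde f_\Lambda^{-1}([-\|\rho_0\|_{L^\infty},\infty))$ is contained in a $\Lambda$-independent half-line $(-\infty,R]$. But $\tilde f^*$ is \emph{bounded}, with $m^*<\tilde f^*<M^*$ (Proposition~\ref{prop:2.10}(ii)), and $\tilde f_\Lambda\to\tilde f^*$ only on compact intervals. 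If $\|\rho_0\|_{L^\infty}>|m^*|$, i.e.\ $-\|\rho_0\|_{L^\infty}<m^*$, then for every fixed $\phi_0$ one has $\tilde f^*(\phi_0)>m^*>-\|\rho_0\|_{L^\infty}$, hence for $\Lambda$ large $\tilde f_\Lambda(\phi_0)>-\|\rho_0\|_{L^\infty}$ and so $R_\Lambda:=\tilde f_\Lambda^{-1}(-\|\rho_0\|_{L^\infty})>\phi_0$; therefore $R_\Lambda\to\infty$ and no uniform $R$ exists. The same failure occurs at the minimum when $\|\rho_0\|_{L^\infty}>M^*$. So your argument silently assumes $\|\rho_0\|_{L^\infty}<\min(|m^*|,M^*)$, whereas the theorem allows any $\rho_0\in\mathcal C^\infty(\overline\Omega)$.

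The paper avoids this by first introducing $\psi$ solving $-\nabla\cdot(\varepsilon\nabla\psi)=\rho_0$ in $\Omega$ with the homogeneous Robin condition, and studying $\bar\phi_\Lambda=\phi_\Lambda-\psi$, which satisfies $-\nabla\cdot(\varepsilon\nabla\bar\phi_\Lambda)=\tilde f_\Lambda(\phi_\Lambda)$. At an interior maximum of $\bar\phi_\Lambda$ the sign condition becomes $\tilde f_\Lambda(\phi_\Lambda(x^*))\geq 0$, and $0$ lies \emph{strictly} inside $(m^*,M^*)$ for every admissible data, so the corresponding preimage is uniformly bounded; since $\psi$ is a fixed bounded function, the bound transfers back to $\phi_\Lambda$. (The contradiction is then run by dividing $\tilde f_\Lambda\geq0$ through by a power of $c_{0,\Lambda}$ to compare pure exponentials, which is where the unequal exponents $\lambda_i/\lambda_0$ make the (A3)--(A4) case harder than (A1)--(A2).) You should replace your direct threshold $-\|\rho_0\|_{L^\infty}$ by this $\psi$-subtraction; without it the a priori bound, and hence the rest of the bootstrap, fails for large $\|\rho_0\|_{L^\infty}$. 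The remainder of your argument (Corollary~\ref{coro:2.09}, Schauder, Arzel\`a--Ascoli, uniqueness via $\mathrm d\tilde f^*/\mathrm d\phi<0$) is correct and matches Section~\ref{sec:3.2} of the paper.
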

\begin{rmk}
\label{rmk:1.6}
As $\lambda_i=v_i/v_0$, $\tilde\mu_0=1/v_0$, $c_i^*=c_i$ and $\phi^*=\phi$, \eqref{eq:1.28}--\eqref{eq:1.31} become \eqref{eq:1.05}--\eqref{eq:1.07} (up to scalar multiples).
Besides, \eqref{eq:1.31} is the limiting equation of \eqref{eq:1.26} as $\Lambda\to\infty$.
This shows the PB-steric equations include the modified PB equations \eqref{eq:1.05}--\eqref{eq:1.07}.
\end{rmk}

The rest of the paper is organized as follows. In section~\ref{sec:2}, we analyze of functions $f_\Lambda$, $f^*$, $\tilde f_\Lambda$ and $\tilde f^*$ under the assumptions (A1)--(A2) and (A3)--(A4), respectively.
The proof of Theorems~\ref{thm:1.1} and \ref{thm:1.5} are stated in section~\ref{sec:3}.
Numerical schemes are shown in section~\ref{sec:4}.

\section{Analysis of nonlinear terms under (A1)--(A2) and (A3)--(A4)}
\label{sec:2}

\subsection{Analysis of \texorpdfstring{$c_{i,\Lambda}$}{ciΛ} and \texorpdfstring{$f_\Lambda$}{fΛ} under (A1) and (A2)}
\label{sec:2.1}

In this section, we firstly use assumptions (A1) and (A2) and apply Gaussian elimination to solve system \eqref{eq:1.13} and obtain a unique positive smooth solution $c_{i,\Lambda}=c_{i,\Lambda}(\phi)$ for $\phi\in\mathbb R$ and $i=0,1,\dots,N$.
Then we establish the asymptotic behavior of $c_{i,\lambda}$ (as $\phi$ tends to infinity) and the strict decrease and unboundedness of $f_\Lambda=\sum\limits_{i=1}^Nz_ic_{i,\Lambda}$ (see Proposition~\ref{prop:2.01}).

For functions $c_{i,\Lambda}$ and $f_{\Lambda}$ under the assumptions (A1) and (A2), we have the following.
\begin{prop}
\label{prop:2.01}
Assume that $z_0=0$, (A1) and (A2) hold true.
Suppose $z_iz_j<0$ for some $i,j\in\{1,\dots,N\}$.
\begin{itemize}
\item[(i)] System \eqref{eq:1.13} can be solved uniquely by
\begin{equation}
\label{eq:2.01}
c_{i,\Lambda}(\phi)=\frac{W_0\left(\Lambda H(\gamma,z_1,\dots,z_N,\phi)e^{\Lambda\tilde\mu_0+\hat\mu_0}\right)}{\Lambda H(\gamma,z_1,\dots,z_N,\phi)}e^{\bar\mu_i-z_i\phi}
\end{equation}
for $\phi\in\mathbb R$ and $i=0,1,\dots,N$, where $W_0$ denotes the principal branch of Lambert W function (cf. \cite{2022mezo}).
\item[(ii)] Function $f_{\Lambda}=\sum\limits_{i=1}^Nz_ic_{i,\Lambda}$ is strictly decreasing on $\mathbb R$.
\item[(iii)] Let $I=\{i:z_i=\max\limits_{0\leq k\leq N}z_k\}$ and $J=\{j:z_j=\min\limits_{0\leq k\leq N}z_k\}$. Then
\begin{itemize}
\item[(a)] $\lim\limits_{\phi\to-\infty}c_{i_0,\Lambda}(\phi)=\infty$ for $i_0\in I$, and $\lim\limits_{\phi\to-\infty}c_{k,\Lambda}(\phi)=0$ for $k\notin I$;
\item[(b)] $\lim\limits_{\phi\to\infty}c_{j_0,\Lambda}(\phi)=\infty$ for $j_0\in J$, and $\lim\limits_{\phi\to\infty}c_{k,\Lambda}(\phi)=0$ for $k\notin J$.
\end{itemize}
\item[(iv)] For each $\Lambda>0$, the range of $f_\Lambda$ is entire space $\mathbb R$, and $\lim\limits_{\phi\to\pm\infty}f_\Lambda(\phi)=\mp\infty$.
\end{itemize}
\end{prop}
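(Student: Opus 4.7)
The plan is to exploit the symmetry built into (A1): the steric sum $\sum_{j=0}^N g_{ij}c_j=(1-\gamma)c_0+(\gamma/Z)\sum_{j=1}^N c_j$ is independent of $i$, and by (A2) so is $\tilde\mu_i$. Subtracting the $i=0$ equation of \eqref{eq:1.13} from each $i\geq 1$ equation therefore cancels both the steric and the $\Lambda\tilde\mu$ terms, leaving $\ln(c_i/c_0)+z_i\phi=\hat\mu_i-\hat\mu_0=\bar\mu_i$; this is exactly \eqref{eq:1.16}. Substituting \eqref{eq:1.16} back into the $i=0$ equation collapses the $(N+1)$-variable system to the single scalar equation \eqref{eq:1.17}. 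I would then set $y:=\Lambda H c_0$ to rewrite \eqref{eq:1.17} as $ye^y=\Lambda H e^{\Lambda\tilde\mu_0+\hat\mu_0}$; the right-hand side is strictly positive and hence above $-1/e$, so the principal branch $W_0$ is the unique inverse, delivering the closed form \eqref{eq:2.01} together with positivity and smoothness of $c_{i,\Lambda}$.

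\textbf{Part (ii).} I would differentiate \eqref{eq:1.17} and use $H'(\phi)=-(\gamma/Z)\sum_{j=1}^N z_j e^{\bar\mu_j-z_j\phi}=-\gamma f_\Lambda/(Zc_0)$ (the second equality from \eqref{eq:1.16}) to obtain
\[
\frac{c_0'}{c_0}=\frac{\Lambda\gamma f_\Lambda/Z}{1+\Lambda H c_0}.
\]
Differentiating $c_i=c_0 e^{\bar\mu_i-z_i\phi}$ and weighting the result by $z_i$ then yields
\[
f_\Lambda'=\frac{\Lambda\gamma f_\Lambda^2/Z}{1+\Lambda H c_0}-\sum_{j=1}^N z_j^2 c_j.
\]
Setting $C:=\sum_{j=1}^N c_j$ gives $Hc_0=(1-\gamma)c_0+\gamma C/Z$, and the identity $C\sum_{j=1}^N z_j^2c_j-f_\Lambda^2=\tfrac12\sum_{i,j=1}^N(z_i-z_j)^2c_ic_j$ is strictly positive because the hypothesis $z_iz_j<0$ for some $i,j\in\{1,\dots,N\}$ prevents all the $z_j$'s from being equal. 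A short algebraic simplification then reduces the formula for $f_\Lambda'$ to
\[
f_\Lambda'<-\frac{f_\Lambda^2\,(1+\Lambda(1-\gamma)c_0)}{C(1+\Lambda H c_0)}\leq 0,
\]
so $f_\Lambda'<0$ everywhere on $\mathbb R$.

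\textbf{Parts (iii) and (iv).} The asymptotics of $c_{i,\Lambda}$ are controlled by the dominant exponentials in $H$. The sign-change hypothesis gives $\max_{0\leq k\leq N}z_k>0$, so as $\phi\to-\infty$ only the terms with $j\in I$ survive in $H$, making $H\to\infty$; the asymptotic $W_0(x)\sim\ln x$ applied to \eqref{eq:2.01} then gives $c_{0,\Lambda}\sim\ln(\Lambda H)/(\Lambda H)\to 0$. Plugging this into $c_i=c_0 e^{\bar\mu_i-z_i\phi}$, the exponential factor exactly matches the decay of $c_0$ when $i\in I$, producing blow-up, and is dominated by the decay of $c_0$ otherwise, producing decay; the analysis at $+\infty$ is symmetric with $J$ replacing $I$, which establishes (iii). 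For (iv), the leading term of $f_\Lambda$ as $\phi\to-\infty$ is $z_{i_0}c_{i_0,\Lambda}$ with $z_{i_0}>0$ and $c_{i_0,\Lambda}\to\infty$, so $f_\Lambda\to+\infty$; symmetrically $f_\Lambda\to-\infty$ as $\phi\to+\infty$, and combined with the strict monotonicity of (ii), the range of $f_\Lambda$ is all of $\mathbb R$. The main obstacle lies in (ii): although the final algebra is routine once the identity for $f_\Lambda'$ is in hand, recognizing that the positive first term can be absorbed only through a strict Cauchy--Schwarz-type identity, and that the sign-change hypothesis on the valences is precisely what makes that identity strict, is the crucial observation of the argument.
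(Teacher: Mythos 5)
Your proposal is correct and follows essentially the same route as the paper: Gaussian elimination reducing \eqref{eq:1.13} to \eqref{eq:1.16}--\eqref{eq:1.17} and solving via the Lambert $W$ function for (i), a Cauchy--Schwarz argument (made strict by the sign-change hypothesis on the valences) for (ii), the asymptotics $W_0(x)\sim\ln x$ together with the dominant exponentials in $H$ for (iii), and the conclusion from (iii) plus monotonicity for (iv). Your bookkeeping in (ii) differs cosmetically---you express $c_{0,\Lambda}'/c_{0,\Lambda}$ in terms of $f_\Lambda$ and collapse to the cleaner bound $f_\Lambda'<-f_\Lambda^2(1+\Lambda(1-\gamma)c_0)/(C(1+\Lambda Hc_0))$, whereas the paper applies Cauchy--Schwarz directly in \eqref{eq:2.07}---but the underlying mechanism is identical.
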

\begin{proof}By (A1) and (A2), system \eqref{eq:1.13} can be expressed as
\begin{equation}
\label{eq:2.02}
\ln(c_{i,\Lambda})+z_i\phi+\Lambda\left((1-\gamma)c_{0,\Lambda}+\frac\gamma Z\sum_{j=1}^Nc_{j,\Lambda}\right)=\Lambda\tilde\mu_0+\hat\mu_i\quad\text{for}~i=0,1,\dots,N.\end{equation}
Subtracting \eqref{eq:2.02} for $i\neq0$ from that for $i=0$, we get $\ln(c_{i,\Lambda}/c_{0,\Lambda})+z_i\phi=\hat\mu_i-\hat\mu_0:=\bar\mu_i$, which implies \eqref{eq:1.16} under the assumption $z_0=0$.
Then we plug \eqref{eq:1.16} into \eqref{eq:2.02} for $i=0$ to get \eqref{eq:1.17},
which implies
\begin{equation}
\label{eq:2.03}
\left[\Lambda H(\gamma,z_1,\dots,z_N,\phi)c_{0,\Lambda}\right]e^{\Lambda H(\gamma,z_1,\dots,z_N,\phi)c_{0,\Lambda}}=\Lambda H(\gamma,z_1,\dots,z_N,\phi)e^{\Lambda\tilde\mu_0+\hat\mu_0}\end{equation}
for $\phi\in\mathbb R$.
From theorems of Lambert W function in \cite{2022mezo}, \eqref{eq:2.03} has a unique smooth positive solution $c_{0,\Lambda}$ denoted by
\begin{equation}
\label{eq:2.04}
c_{0,\Lambda}(\phi)=\frac{W_0\left(\Lambda H(\gamma,z_1,\dots,z_N,\phi)e^{\Lambda\tilde\mu_0+\hat\mu_0}\right)}{\Lambda H(\gamma,z_1,\dots,z_N,\phi)}\quad\text{for}~\phi\in\mathbb R.
\end{equation}
By \eqref{eq:1.16} and \eqref{eq:2.04}, we arrive at \eqref{eq:2.01} and complete the proof of (i).

Next we state the proof of (ii).
We differentiate \eqref{eq:1.17} with respect to $\phi$ and get
\[\frac1{c_{0,\Lambda}}\frac{\mathrm dc_{0,\Lambda}}{\mathrm d\phi}+\Lambda H(\gamma,z_1,\dots,z_N,\phi)\frac{\mathrm dc_{0,\Lambda}}{\mathrm d\phi}-\Lambda c_{0,\Lambda}\frac\gamma Z\sum_{j=1}^Nz_je^{\bar\mu_j-z_j\phi}=0,\]
which gives
\begin{equation}\label{eq:2.05}
\frac{\mathrm dc_{0,\Lambda}}{\mathrm d\phi}=\frac{\displaystyle\frac\gamma Z\sum_{j=1}^Nz_je^{\bar\mu_j-z_j\phi}}{(\Lambda c_{0,\Lambda})^{-1}+H(\gamma,z_1,\dots,z_N,\phi)}c_{0,\Lambda}.\end{equation}
On the other hand, by \eqref{eq:1.16}, we write $f_\Lambda$ as $f_\Lambda(\phi)=\sum\limits_{i=1}^Nz_ic_{0,\Lambda}e^{\bar\mu_i-z_i\phi}$ for $\phi\in\mathbb R$.
Then differentiating $f_\Lambda$ with respect to $\phi$ and using \eqref{eq:2.05}, we get
\begin{equation}
\label{eq:2.06}
\frac{\mathrm df_{\Lambda}}{\mathrm d\phi}=\left(\frac{\displaystyle\frac{\gamma}{Z}\left(\sum_{i=1}^Nz_ie^{\bar\mu_i-z_i\phi}\right)\left(\sum_{i=1}^Nz_ie^{\bar\mu_i-z_i\phi}\right)}{(\Lambda c_{0,\Lambda})^{-1}+H(\gamma,z_1,\dots,z_N,\phi)}-\sum_{i=1}^Nz_i^2e^{\bar\mu_i-z_i\phi})\right)c_{0,\Lambda}.\end{equation}
Note the inequality
\begin{equation}
\label{eq:2.07}
\begin{aligned} &~~~\left(\sum_{i=1}^Nz_i^2e^{\bar\mu_i-z_i\phi}\right)\left[(\Lambda c_{0,\Lambda})^{-1}+H(\gamma,z_1,\dots,z_N,\phi)\right]>\frac\gamma Z\left(\sum_{i=1}^Nz_i^2e^{\bar\mu_i-z_i\phi}\right)\left(\sum_{i=1}^Ne^{\bar\mu_i-z_i\phi}\right)>\frac\gamma Z\left(\sum_{i=1}^Nz_ie^{\bar\mu_i-z_i\phi}\right)^2,\end{aligned}
\end{equation}
where we have used the fact that $z_iz_j<0$ for some $i,j\in\{1,\dots,N\}$.
Therefore, by \eqref{eq:2.06} and \eqref{eq:2.07}, we have $\mathrm df_{\Lambda}/\mathrm d\phi<0$ on $\mathbb R$ and the proof of (ii) is complete.

~

To prove (iii), we need the following claim.
\begin{claim}\label{claim:2.02}
There holds that
\begin{equation}
\label{eq:2.08}
\lim_{\phi\to\pm\infty}
\frac{W_0(\Lambda H(\gamma,z_1,\dots,z_N,\phi)e^{\Lambda\tilde\mu_0+\hat\mu_0})}{\ln\Lambda+\ln(H(\gamma,z_1,\dots,z_N,\phi))+\Lambda\tilde\mu_0+\hat\mu_0}=1.
\end{equation}
\end{claim}
\begin{proof}[Proof of Claim~\ref{claim:2.02}]
Recall the asymptotic behavior of Lambert W function in \cite{2008hoorfar,2022mezo}
\begin{equation}
\label{eq:2.09}
\ln x-\ln\ln x+\frac{\ln\ln x}{2\ln x}\leq W_0(x)\leq\ln x-\ln\ln x+\frac e{e-1}\frac{\ln\ln x}{\ln x}\quad\text{for}~x\geq e.
\end{equation}
Since $z_iz_j<0$ for some $i,j\in\{1,\dots,N\}$, it is clear that $\lim\limits_{\phi\to\pm\infty}\sum\limits_{i=1}^Ne^{\bar\mu_i-z_i\phi}=\infty$, which implies that\[\lim\limits_{\phi\to\pm\infty}\Lambda H(\gamma,z_1,\dots,z_N,\phi)e^{\Lambda\tilde\mu_0+\hat\mu_0}=\infty.\]
Then by \eqref{eq:2.09}, we obtain \eqref{eq:2.08} and complete the proof of Claim~\ref{claim:2.02}.
\end{proof}
Now we state the proof of (iii).
Since $z_iz_j<0$ for some $i,j\in\{1,\dots,N\}$, we know $z_{i_0}>0$ for $i_0\in I$ and $z_{j_0}<0$ for $j_0\in J$.
Then for $i_0\in I$, we may use \eqref{eq:1.16}, \eqref{eq:2.04} and Claim~\ref{claim:2.02} to get
\[\begin{aligned}\lim_{\phi\to-\infty}c_{i_0,\Lambda}(\phi)&=\lim_{\phi\to-\infty}\frac{e^{\bar\mu_{i_0}-z_{i_0}\phi}}{\Lambda H(\gamma,z_1,\dots,z_N,\phi)}W_0(\Lambda H(\gamma,z_1,\dots,z_N,\phi)e^{\Lambda\tilde\mu_0+\hat\mu_0})\\&=\frac Z{\Lambda\gamma\sum\limits_{i\in I}e^{\bar\mu_i-\bar\mu_{i_0}}}\lim_{\phi\to-\infty}W_0(\Lambda H(\gamma,z_1,\dots,z_N,\phi)e^{\Lambda\tilde\mu_0+\hat\mu_0})=\infty.\end{aligned}\]
On the other hand, for $k\notin I$, we have
\[\lim_{\phi\to-\infty}c_{k,\Lambda}(\phi)=\lim_{\phi\to-\infty}\frac{e^{\bar\mu_k-z_k\phi}[\left(\ln\Lambda+\ln(H(\gamma,z_1,\dots,z_N,\phi))+\Lambda\tilde\mu_0+\hat\mu_0\right)]}{\Lambda H(\gamma,z_1,\dots,z_N,\phi)}=0.\]
Thus the proof of (iii)(a) is complete.
The proof of (iii)(b) is similar to (iii)(a) by using \eqref{eq:1.16}, \eqref{eq:1.17}, and Claim~\ref{claim:2.02}. Therefore, we complete the proof of (iii).

Finally, we give the proof of (iv). By \eqref{eq:1.21}, function $f_\Lambda$ can be denoted as $\displaystyle f_\Lambda(\phi)=\sum_{i\in I}z_ic_{i,\Lambda}(\phi)+\sum_{i\notin I}z_ic_{i,\Lambda}(\phi)$ for $\phi\in\mathbb R$, where $I$ is defined in (iii).
Then by (iii), we have $\displaystyle\lim_{\phi\to-\infty}f_\Lambda(\phi)=\infty$ and $\displaystyle\lim_{\phi\to\infty}f_\Lambda(\phi)=-\infty$.
Therefore, we conclude (iv) and complete the proof of Proposition~\ref{prop:2.01}.
\end{proof}

\subsection{Analysis of \texorpdfstring{$c_i^*$}{c*} and \texorpdfstring{$f^*$}{f*} under (A1) and (A2)}
\label{sec:2.2}

Function $c_0^*$ is the limit $\lim\limits_{\Lambda\to\infty}c_{0,\Lambda}$, where function $c_{0,\Lambda}$ is the solution of \eqref{eq:1.17} for $\Lambda>0$.
Let $\delta=\Lambda^{-1}$ and $\tilde c_{0,\delta}=c_{0,\Lambda}$.
Then by \eqref{eq:1.17}, $\tilde c_{0,\delta}$ satisfies
\begin{equation}
\label{eq:2.10}
\delta\ln(\tilde c_{0,\delta}(\phi))+H(\gamma,z_1,\dots,z_N,\phi)\tilde c_{0,\delta}=\tilde\mu_0+\delta\hat\mu_0\quad\text{for}~\phi\in\mathbb R.
\end{equation}
Notice that $\Lambda\to\infty$ is equivalent to $\delta\to0^+$ so $c_0^*$ also equals the limit $\lim\limits_{\delta\to0^+}\tilde c_{0,\delta}$.
Moreover $c_0^*$ is defined by \eqref{eq:1.21}, which is \eqref{eq:2.10} with $\delta=0$.
The convergence of $\tilde c_{0,\delta}$ as $\delta\to0^+$, i.e. the convergence of $c_{0,\Lambda}$ as $\Lambda\to\infty$ is proved in Propositions~\ref{prop:2.03}, so by \eqref{eq:1.16}, we obtain the convergence of $c_{i,\Lambda}$ as $\Lambda\to\infty$.

\begin{prop}
\label{prop:2.03}
Let $c_i^*$ be defined in \eqref{eq:1.21}.
\begin{itemize}
\item[(i)] For $\phi\in\mathbb R$, $\lim\limits_{\Lambda\to\infty}c_{i,\Lambda}(\phi)=c_i^*(\phi)$ for $i=0,1,\dots,N$.
\item[(ii)] $\displaystyle\lim_{\Lambda\to\infty}\|c_{i,\Lambda}-c_i^*\|_{\mathcal C^m[a,b]}=0$ for $i=0,1,\dots,N$, $m\in\mathbb N$, and $a<b$, where $\|h\|_{\mathcal C^m[a,b]}:=\sum\limits_{k=0}^m\|h^{(k)}\|_\infty$ for $h\in\mathcal C^m[a,b]$.
\end{itemize}
\end{prop}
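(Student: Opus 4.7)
The plan is to reduce both parts to analyzing the scalar equation \eqref{eq:2.10} for $\tilde c_{0,\delta}=c_{0,\Lambda}$ with $\delta=1/\Lambda$, and then to transfer the conclusion to $c_{i,\Lambda}$ ($i\geq 1$) via \eqref{eq:1.16} by multiplying by the smooth $\Lambda$-independent factor $e^{\bar\mu_i-z_i\phi}$. For part (i), set $y_\Lambda(\phi):=\Lambda H(\gamma,z_1,\dots,z_N,\phi)\,c_{0,\Lambda}(\phi)$; \eqref{eq:2.03} then reads $y_\Lambda=W_0(\Lambda H(\phi)\,e^{\Lambda\tilde\mu_0+\hat\mu_0})$. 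By Claim~\ref{claim:2.02}, $y_\Lambda(\phi)/[\ln\Lambda+\ln H(\phi)+\Lambda\tilde\mu_0+\hat\mu_0]\to 1$ as $\Lambda\to\infty$ for fixed $\phi$, so $y_\Lambda/\Lambda\to\tilde\mu_0$ and therefore $c_{0,\Lambda}(\phi)=y_\Lambda(\phi)/(\Lambda H(\phi))\to\tilde\mu_0/H(\phi)=c_0^*(\phi)$; the corresponding pointwise limit for $i\geq 1$ follows from \eqref{eq:1.16} and \eqref{eq:1.21}.

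For part (ii), I invoke the implicit function theorem on Banach spaces. Fix $a<b$ and $m\in\mathbb N$, and let $X=\mathcal C^m[a,b]$. Since $H\in\mathcal C^\infty[a,b]$ is positive, $c_0^*=\tilde\mu_0/H$ is smooth and bounded below on $[a,b]$ by some $\beta>0$. Let $U=\{c\in X:c(\phi)>\beta/2\ \text{for all}\ \phi\in[a,b]\}$, which is open in $X$, and define $F:\mathbb R\times U\to X$ by $F(\delta,c)(\phi)=\delta\ln c(\phi)+H(\phi)c(\phi)-\tilde\mu_0-\delta\hat\mu_0$. A Nemytskii-type check shows $F\in\mathcal C^\infty$; in particular $F(0,c_0^*)=0$ and $D_cF(0,c_0^*):X\to X$ is the multiplication operator $M_H:h\mapsto Hh$, which is a bounded linear isomorphism of $X$ with inverse $g\mapsto g/H$ because $H$ is smooth and bounded away from $0$ on $[a,b]$. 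By the implicit function theorem (\cite[Thm.~15.1]{1973deimling}) there exist $\eta>0$ and a $\mathcal C^\infty$ curve $\delta\mapsto c(\delta)\in U$ on $(-\eta,\eta)$ with $c(0)=c_0^*$ and $F(\delta,c(\delta))\equiv 0$. For each $\delta\in(0,\eta)$ and each $\phi\in[a,b]$, $c(\delta)(\phi)>0$ solves \eqref{eq:2.10} at $\phi$, so pointwise uniqueness from the Lambert $W$ representation \eqref{eq:2.04} forces $c(\delta)=\tilde c_{0,\delta}=c_{0,1/\delta}$. Continuity of $\delta\mapsto c(\delta)$ at $\delta=0$ then yields $\|c_{0,\Lambda}-c_0^*\|_{\mathcal C^m[a,b]}\to 0$, and multiplication by $e^{\bar\mu_i-z_i\phi}\in\mathcal C^m[a,b]$ transfers this to every $i\geq 1$, completing (ii).

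The main obstacle is the Banach-space IFT setup: verifying that $F$ is $\mathcal C^1$ (indeed $\mathcal C^\infty$) on $\mathbb R\times U$, which reduces to showing that the Nemytskii operator $c\mapsto\ln c$ is smooth on $U$ (using the uniform positive lower bound $c>\beta/2$) and that pointwise multiplication by the smooth function $H$ is bounded on $\mathcal C^m[a,b]$, together with correctly computing $D_cF(0,c_0^*)=M_H$. A secondary matching point — that the local IFT branch $c(\delta)$ coincides with the Lambert-$W$ solution $\tilde c_{0,\delta}$ — is handled by the pointwise uniqueness of positive solutions of \eqref{eq:2.10} together with the fact that $c(\delta)$ remains in $U$ for small $\delta>0$.
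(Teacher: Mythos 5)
Your proof is correct and follows essentially the same route as the paper (Lambert-$W$ asymptotics for (i); Banach-space implicit function theorem plus pointwise uniqueness of the branch for (ii)), the only substantive difference being that you feed $c_0$ itself into the IFT on the open set $U=\{c>\beta/2\}$, whereas the paper substitutes $w=\ln c_0$ into $K_1(w,\delta)=\delta w+He^{w}-\tilde\mu_0-\delta\hat\mu_0$ so that positivity is automatic and $D_wK_1(w^*,0)$ is multiplication by the constant $\tilde\mu_0$; both linearizations are bounded isomorphisms of $\mathcal C^m[a,b]$, so the two setups are interchangeable. One small slip to fix: in part (i) you invoke Claim~\ref{claim:2.02}, which is the $\phi\to\pm\infty$ limit at fixed $\Lambda$, but the limit you actually need, $W_0\bigl(\Lambda H(\cdot)e^{\Lambda\tilde\mu_0+\hat\mu_0}\bigr)/(\Lambda\tilde\mu_0)\to1$ as $\Lambda\to\infty$ at fixed $\phi$, is Claim~\ref{claim:2.04} (same proof via \eqref{eq:2.09}, since the argument of $W_0$ still tends to $\infty$), so the conclusion stands but the reference should be corrected.
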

\begin{proof}To prove (i), we need the following claim.
\begin{claim}
\label{claim:2.04}Assume that (A1) and (A2) hold ture. Then there holds that
\[
\lim_{\Lambda\to\infty}\frac{W_0(\Lambda H(\gamma,z_1,\dots,z_N,\phi)e^{\Lambda\tilde\mu_0+\hat\mu_0})}{\Lambda\tilde\mu_0}=1\quad\text{for}~\phi\in\mathbb R.
\]
\end{claim}
\begin{proof}[Proof of Claim~\ref{claim:2.04}]The proof is similar to Claim~\ref{claim:2.02} so we omit it here.\end{proof}
By Proposition~\ref{prop:2.01}(i) and Claim~\ref{claim:2.04}, (i) follows from
\[
\lim_{\Lambda\to\infty}c_{i,\Lambda}(\phi)=e^{\bar\mu_i-z_i\phi}\lim_{\Lambda\to\infty}\frac{\Lambda\tilde\mu_0}{\Lambda H(\gamma,z_1,\dots,z_N,\phi)}=\frac{\tilde\mu_0e^{\bar\mu_i-z_i\phi}}{H(\gamma,z_1,\dots,z_N,\phi)}=c_i^*(\phi).\]

To prove (ii), we fix $m\in\mathbb N$, $a,b\in\mathbb R$ and $a<b$ arbitrarily.
Let $\|\cdot\|_{\mathcal C^m}:=\|\cdot\|_{\mathcal C^m[a,b]}$ for notational convenience.
For $\Lambda>0$, let $\delta=\Lambda^{-1}$, $\tilde c_{0,\delta}=c_{0,\Lambda}$, $w_\delta=\ln(\tilde c_{0,\delta})$, and $w^*=\ln(c_0^*)$.
Obviously, $\delta\to0^+$ is equivalent to $\Lambda\to\infty$.
Hence by \eqref{eq:1.16}, it suffices to show that $\lim\limits_{\delta\to0^+}\|e^{w_\delta}-e^{w^*}\|_{\mathcal C^m}=0$.
Because $w_\delta=\ln(\tilde c_{0,\delta})$ and $\tilde c_{0,\delta}=c_{0,\Lambda}$, \eqref{eq:2.10} can be denoted as $K_1(w_\delta(\phi),\delta)=0
$
for $\delta>0$ and $\phi\in[a,b]$, where $K_1$ is a $\mathcal C^1$-function on $\mathcal C^m[a,b]\times\mathbb R$ defined by
\begin{equation}
\label{eq:2.11}
K_1(w(\phi),\delta)=\delta w(\phi)+H(\gamma,z_1,\dots,z_N,\phi)e^{w(\phi)}-\tilde\mu_0-\delta\hat\mu_0
\end{equation}
for all $w\in\mathcal C^m[a,b]$ and $\phi\in[a,b]$.
Note that $K_1(w^*,0)=0$ by (i).
A direct calculation for Fr\'echet derivative of \eqref{eq:2.11} gives $D_wK_1(w(\phi),\delta)=\delta+H(\gamma,z_1,\dots,z_N,\phi)e^{w(\phi)}$ for all $w\in\mathcal C^m[a,b]$ and $\phi\in[a,b]$.
Then by (i), we get $D_wK_1(w^*(\phi),0)=\tilde\mu_0>0$ for all $\phi\in[a,b]$. This implies that $D_wK_1(w^*,0)I$ is a bounded and invertible linear map on the Banach space $\mathcal C^m[a,b]$, where $I$ is an identity map.
Hence by the implicit function theorem on Banach spaces (cf. \cite[Corollary 15.1]{1973deimling}), there exist an open subset $B_{\delta_0}(w^*)\times(-\delta_0,\delta_0)\subset\mathcal C^m[a,b]\times\mathbb R$ and a unique $\mathcal C^1$-function $\tilde w(\cdot,\delta)$ of $\delta\in(-\delta_0,\delta_0)$ with $\tilde w(\cdot,\delta)\in B_{\delta_0}(w^*)\subset\mathcal C^m[a,b]$ for $\delta\in(-\delta_0,\delta_0)$ such that $K_1(\tilde w(\cdot,\delta),\delta)=0$ for all $\delta\in(-\delta_0,\delta_0)$, which gives $\displaystyle\lim_{\delta\to0^+}\|\tilde w(\cdot,\delta)-w^*\|_{\mathcal C^m}=0$.
By Proposition~\ref{prop:2.01}(i), equation $K_1(w,\delta)=0$ has a unique solution $w=w_\delta$, which implies $\tilde w(\cdot,\delta)=w_\delta(\cdot)$ for $\delta\in(-\delta_0,\delta_0)$.
Therefore, we obtain $\displaystyle\lim_{\delta\to0^+}\|w_\delta-w^*\|_{\mathcal C^m}=0$, i.e., $\displaystyle\lim_{\delta\to0^+}\|e^{w_\delta}-e^{w^*}\|_{\mathcal C^m}=0$, and complete the proof of Proposition~\ref{prop:2.03}(ii).
\end{proof}

\begin{coro}\label{coro:2.05}
$\displaystyle\lim_{\Lambda\to\infty}\|f_\Lambda-f^*\|_{\mathcal C^m[a,b]}=0$ for $m\in\mathbb N$ and $a<b$.
\end{coro}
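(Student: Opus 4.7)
The statement is a direct consequence of Proposition~\ref{prop:2.03}(ii), so my plan is to reduce it to that convergence via the triangle inequality applied componentwise.

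First I would fix $m\in\mathbb{N}$ and $a<b$, and use the definitions $f_\Lambda(\phi)=\sum_{i=1}^N z_i c_{i,\Lambda}(\phi)$ and $f^*(\phi)=\sum_{i=1}^N z_i c_i^*(\phi)$ to write
\[
f_\Lambda-f^*=\sum_{i=1}^N z_i\bigl(c_{i,\Lambda}-c_i^*\bigr).
\]
Since $\|\cdot\|_{\mathcal{C}^m[a,b]}$ is a norm on $\mathcal{C}^m[a,b]$, it is subadditive and positively homogeneous, so
\[
\|f_\Lambda-f^*\|_{\mathcal{C}^m[a,b]}\leq\sum_{i=1}^N|z_i|\,\|c_{i,\Lambda}-c_i^*\|_{\mathcal{C}^m[a,b]}.
\]
Then I would invoke Proposition~\ref{prop:2.03}(ii), which asserts $\lim_{\Lambda\to\infty}\|c_{i,\Lambda}-c_i^*\|_{\mathcal{C}^m[a,b]}=0$ for each $i=0,1,\dots,N$, to conclude that each of the finitely many terms in the bound tends to zero, whence the left-hand side does as well.

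There is no substantive obstacle here; the only thing to verify is that $\|\cdot\|_{\mathcal{C}^m[a,b]}=\sum_{k=0}^m\|\cdot^{(k)}\|_\infty$ is indeed a norm, which is immediate from the triangle inequality for the sup norm applied separately to each derivative order. Since the sum defining $f_\Lambda-f^*$ has only $N$ terms (independent of $\Lambda$) and the valences $z_i$ are fixed constants, the bound passes to the limit without any uniformity issue. This completes the proof of Corollary~\ref{coro:2.05}.
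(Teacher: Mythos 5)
Your argument is correct and is essentially the paper's own proof: both reduce the corollary to Proposition~\ref{prop:2.03}(ii) by writing $f_\Lambda-f^*=\sum_{i=1}^N z_i(c_{i,\Lambda}-c_i^*)$ and applying the triangle inequality for the $\mathcal{C}^m[a,b]$ norm to the finitely many terms. The paper simply states this deduction more tersely; your added verification that $\|\cdot\|_{\mathcal{C}^m[a,b]}$ is a norm is routine and introduces no difference in substance.
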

\begin{proof}
It follows from Proposition~\ref{prop:2.03}(ii) and the fact that $\displaystyle f_\Lambda(\phi)=\sum_{i=1}^Nz_ic_{i,\Lambda}(\phi)$ and $\displaystyle f^*(\phi)=\sum_{i=1}^Nz_ic_i^*(\phi)$ for $\phi\in\mathbb R$.
\end{proof}

For function $f^*$, we have the following proposition.

\begin{prop}Let $f^*$ be the function defined in \eqref{eq:1.22}.
\label{prop:2.06}
\begin{itemize}
\item[(i)] Function $f^*$ is strictly decreasing on $\mathbb R$.
\item[(ii)] Function $f^*$ satisfies $m^*<f^*(\phi)<M^*$ for all $\phi\in\mathbb R$, where\[m^*=\lim_{\phi\to\infty}f^*(\phi)<0~~\text{and}~~M^*=\lim_{\phi\to-\infty}f^*(\phi)>0.\]
\end{itemize}
\end{prop}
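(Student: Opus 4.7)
The plan is to differentiate $f^*$ directly using its explicit rational form \eqref{eq:1.21} and show the derivative is strictly negative, then deduce (ii) from the resulting monotonicity together with a dominant-exponential analysis of the one-sided limits. This mirrors the strategy used for $f_\Lambda$ in Proposition~\ref{prop:2.01}(ii)--(iv), but avoids the Lambert-W representation.

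Write $a_i(\phi)=e^{\bar\mu_i-z_i\phi}$ and $H(\phi)=H(\gamma,z_1,\ldots,z_N,\phi)$ for brevity, so that $f^*(\phi)=\tilde\mu_0 P(\phi)/H(\phi)$ with $P(\phi)=\sum_{i=1}^N z_i a_i(\phi)$. Using $\mathrm{d}a_i/\mathrm{d}\phi=-z_i a_i$, the quotient rule together with a short algebraic rearrangement should give
\[
\frac{\mathrm{d}f^*}{\mathrm{d}\phi}=\frac{\tilde\mu_0}{H^2}\Bigl[-(1-\gamma)\sum_{i=1}^N z_i^2 a_i-\frac{\gamma}{Z}\Bigl(\bigl(\sum_{i=1}^N z_i^2 a_i\bigr)\bigl(\sum_{j=1}^N a_j\bigr)-\bigl(\sum_{i=1}^N z_i a_i\bigr)^{\!2}\Bigr)\Bigr].
\]
Both bracketed expressions are non-positive: the first because $0\le\gamma\le1$ and $z_i^2 a_i\ge0$, and the second by the Cauchy--Schwarz inequality applied to the vectors $(z_i\sqrt{a_i})_i$ and $(\sqrt{a_i})_i$. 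For strict negativity I would split into cases: when $\gamma<1$, the hypothesis $z_iz_j<0$ for some pair forces $\sum z_i^2 a_i>0$, so the first term is strictly negative; when $\gamma=1$, equality in Cauchy--Schwarz would require all $z_i$'s to coincide, again contradicting $z_iz_j<0$, so the second term is strictly negative. This proves (i).

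For (ii), the strict monotonicity from (i) already yields $m^*<f^*(\phi)<M^*$ at every $\phi$, because the one-sided limits of a strictly monotone function are approached but never attained. Only the signs of $m^*$ and $M^*$ remain. Setting $J=\{j:z_j=\min_{0\le k\le N}z_k\}$ (which lies in $\{1,\dots,N\}$ because the hypothesis forces $z_{j_0}<0$ for $j_0\in J$), I would factor $e^{-z_{j_0}\phi}$ out of both $P$ and $H$ and pass to the limit $\phi\to\infty$: when $\gamma>0$ this yields the finite limit $m^*=\tilde\mu_0 Z z_{j_0}/\gamma<0$, while when $\gamma=0$ the identity $H\equiv1$ forces $f^*\to-\infty$ directly. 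The limit $M^*$ as $\phi\to-\infty$ is handled symmetrically via $I=\{i:z_i=\max_k z_k\}$.

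The only delicate point is the strict decrease at the boundary $\gamma=1$, where the $(1-\gamma)$ contribution vanishes and strictness has to be extracted solely from the equality case of Cauchy--Schwarz; the rest is routine calculus combined with the identification of dominant exponentials.
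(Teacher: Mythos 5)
Your approach is essentially the same as the paper's: both proofs differentiate the explicit rational form $f^*=\tilde\mu_0 P/H$, invoke the Cauchy--Schwarz inequality together with $z_iz_j<0$ to get strict negativity of $\mathrm df^*/\mathrm d\phi$, and identify the dominant exponentials via the index sets $I$ and $J$ to compute $M^*$ and $m^*$. The only difference is cosmetic: you keep the $-(1-\gamma)\sum z_i^2 a_i$ term explicit in the numerator rather than first bounding $H\geq\tfrac{\gamma}{Z}\sum a_i$ as the paper does, which makes the edge cases $\gamma=1$ and $\gamma<1$ slightly more transparent but does not change the substance of the argument.
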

\begin{proof}
By \eqref{eq:1.22}, $f^*$ can be expressed as
\begin{equation}
\label{eq:2.12}f^*(\phi)=\frac{\tilde\mu_0}{H(\gamma,z_1,\dots,z_N,\phi)}\sum_{i=1}^Nz_ie^{\bar\mu_i-z_i\phi}\quad\text{for}~\phi\in\mathbb R.\end{equation}
Then differentiating \eqref{eq:2.12} with respect to $\phi$ gives
\[
\hspace{-1em}
\begin{aligned}
\frac{\mathrm df^*}{\mathrm d\phi}&=\tilde\mu_0\left(\frac{\displaystyle\frac\gamma Z\left(\sum_{i=1}^Nz_ie^{\bar\mu_i-z_i\phi}\right)^2}{\left(H(\gamma,z_1,\dots,z_N,\phi)\right)^2}-\frac{\sum_{i=1}^Nz_i^2e^{\bar\mu_i-z_i\phi}}{H(\gamma,z_1,\dots,z_N,\phi)}\right)\leq\frac{\tilde\mu_0\gamma}Z\frac{\displaystyle\left(\sum_{i=1}^Nz_ie^{\bar\mu_i-z_i\phi}\right)^2-\sum_{i=1}^Ne^{\bar\mu_i-z_i\phi}\sum_{i=1}^Nz_i^2e^{\bar\mu_i-z_i\phi}}{\left(H(\gamma,z_1,\dots,z_N,\phi)\right)^2}<0
\end{aligned}
\]
for $\phi\in\mathbb R$.
Here the last inequality comes from the fact that $z_iz_j<0$ for some $i,j\in\{1,\dots,N\}$. Therefore, we complete the proof of (i).

To prove (ii), we note that
\[m^*:=\tilde\mu_0\lim_{\phi\to\infty}\frac{\displaystyle\left(\sum_{j\in J}+\sum_{j\notin J}\right)z_je^{\bar\mu_j-z_j\phi}}{\displaystyle1-\gamma+\frac\gamma Z\left(\sum_{j\in J}+\sum_{j\notin J}\right)e^{\bar\mu_j-z_j\phi}}=\frac{\tilde\mu_0Z}\gamma\frac{\displaystyle\sum_{j\in J}z_je^{\bar\mu_j}}{\displaystyle\sum_{j\in J}e^{\bar\mu_j}}<0\]and
\[M^*:=\tilde\mu_0\lim_{\phi\to-\infty}\frac{\displaystyle\left(\sum_{i\in I}+\sum_{i\notin I}\right)z_ie^{\bar\mu_i-z_i\phi}}{\displaystyle1-\gamma+\frac\gamma Z\left(\sum_{i\in I}+\sum_{i\notin I}\right)e^{\bar\mu_i-z_i\phi}}=\frac{\tilde\mu_0Z}\gamma\frac{\displaystyle\sum_{i\in I}z_ie^{\bar\mu_i}}{\displaystyle\sum_{i\in I}e^{\bar\mu_i}}>0,\]
where $I$ and $J$ are defined in Proposition~\ref{prop:2.01}(iii).
By (i), $f^*$ is strictly decreasing so we have $m^*<f^*(\phi)<M^*$ for all $\phi\in\mathbb R$ and complete the proof of Proposition~\ref{prop:2.06}.
\end{proof}

\subsection{Analysis of \texorpdfstring{$c_{i,\Lambda}$}{ciΛ} and \texorpdfstring{$\tilde f_{\Lambda}$}{~fΛ} under (A3) and (A4)}
\label{sec:2.3}

In this section, we use assumptions (A3) and (A4) and apply Gaussian elimination to solve system \eqref{eq:1.13} and obtain a unique positive smooth solution $c_{i,\Lambda}=c_{i,\Lambda}(\phi)$ for $\phi\in\mathbb R$ and $i=0,1,\dots,N$.
Then we establish the asymptotic behavior of $c_{i,\Lambda}$ (as $\phi$ tends to infinity), and the strict decrease and unboundedness of $\displaystyle\tilde f_\Lambda=\sum_{i=1}^Nz_ic_{i,\Lambda}$ (see Proposition~\ref{prop:2.07}).

For functions $c_{i,\Lambda}$ and $\tilde f_\Lambda$, we have the following proposition.

\begin{prop}
\label{prop:2.07}
Assume that $z_0=0
$, (A3) and (A4) hold true.
Suppose $z_iz_j<0$ for some $i,j\in\{1,\dots,N\}$.
\begin{itemize}
\item[(i)] System \eqref{eq:1.13} has a unique, smooth, and positive solution $c_{i,\Lambda}=c_{i,\Lambda}(\phi)$ for $\phi\in\mathbb R$ and $i=0,1,\dots,N$.
\item[(ii)] Function $\tilde f_\Lambda=\sum\limits_{i=1}^Nz_ic_{i,\Lambda}$ is strictly decreasing on $\mathbb R$.
\item[(iii)] \begin{enumerate}
\item[(a)] If $z_k\geq0$ ($z_k\leq0$), then $\sup\limits_{\phi\geq0}c_{k,\Lambda}(\phi)\leq e^{\mu_k}$ ($\sup\limits_{\phi\leq0}c_{k,\Lambda}(\phi)\leq e^{\mu_k}$).
\item[(b)] There exist $i_0,j_0\in\{1,\dots,N\}$, $z_{i_0}>0$ and $z_{j_0}<0$ such that $\limsup\limits_{\phi\to-\infty}c_{i_0,\Lambda}(\phi)=\infty$ and $\limsup\limits_{\phi\to\infty}c_{j_0,\Lambda}(\phi)=\infty$.
\end{enumerate}
\item[(iv)] For each $\Lambda>0$, the range of $\tilde f_{\Lambda}$ is entire space $\mathbb R$, and $\displaystyle\lim_{\phi\to\pm\infty}\tilde f_\Lambda\left(\phi\right)=\mp\infty$.
\end{itemize}
\end{prop}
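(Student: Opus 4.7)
The overall strategy parallels Proposition~\ref{prop:2.01}: reduce system (1.13) to a single scalar equation for $c_{0,\Lambda}$, then read off monotonicity and asymptotics of $\tilde f_\Lambda$ from the implicit relation. The new feature under (A3)--(A4) is that $c_{0,\Lambda}$ solves a Lambert-type equation not expressible via $W_0$, and the matrix $(g_{ij})$ is rank one rather than block structured, which alters the algebra but not the overall logic. For part (i), I would use (A3)--(A4) to collapse (1.13) into the scalar equation (1.25) via (1.24). Fixing $\phi$, the left-hand side of (1.25), viewed as a function of $c_{0,\Lambda}>0$, is strictly increasing (sum of $\ln c_{0,\Lambda}$ and positive-coefficient powers $(c_{0,\Lambda})^{\lambda_j/\lambda_0}$) and sweeps from $-\infty$ to $+\infty$. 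The intermediate value theorem gives a unique positive root, and the strict positivity of the derivative in $c_{0,\Lambda}$ allows the implicit function theorem to transfer smoothness in $\phi$; relation (1.24) then propagates smoothness and positivity to every $c_{i,\Lambda}$.

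For part (ii), differentiate (1.13) in $\phi$ and set $T:=\sum_{j=0}^N\lambda_j c_{j,\Lambda}'$. Each equation yields $c_{i,\Lambda}'=-c_{i,\Lambda}(z_i+\Lambda\lambda_i T)$; multiplying by $\lambda_i$, summing, and using $z_0=0$ gives $T=-A/B$ with $A:=\sum_{i=1}^N z_i\lambda_i c_{i,\Lambda}$ and $B:=1+\Lambda\sum_{i=0}^N\lambda_i^2 c_{i,\Lambda}>0$. Substituting back,
\[ \tilde f_\Lambda'(\phi)=-\sum_{i=1}^N z_i^2 c_{i,\Lambda}+\frac{\Lambda A^2}{B}. \]
Negativity then reduces to $\Lambda A^2<B\sum_{i=1}^N z_i^2 c_{i,\Lambda}$, which follows from Cauchy--Schwarz applied to $(z_i\sqrt{c_{i,\Lambda}})_{i=1}^N$ and $(\lambda_i\sqrt{c_{i,\Lambda}})_{i=1}^N$; strictness uses the hypothesis $z_iz_j<0$ for some $i,j$, since $\lambda_i>0$ then prevents $(z_i)$ and $(\lambda_i)$ from being proportional, and the extra term $\lambda_0^2 c_{0,\Lambda}>0$ inside $B$ gives room for the remaining strict bound.

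Part (iii)(a) is immediate by dropping the nonnegative sum on the left of (1.13): $\ln c_{k,\Lambda}\leq \mu_k - z_k\phi$ with $\mu_k:=\Lambda\tilde\mu_k+\hat\mu_k$, so the sign conditions on $z_k$ and $\phi$ yield the stated bounds. For (iii)(b), let $S(\phi):=\sum_{j=0}^N\lambda_j c_{j,\Lambda}(\phi)$ so that (1.13) rewrites as $\ln c_{i,\Lambda}=\mu_i-z_i\phi-\Lambda\lambda_i S(\phi)$. If $S$ stayed bounded along some $\phi_n\to-\infty$, every species with $z_i>0$ would satisfy $c_{i,\Lambda}\to\infty$, forcing $S\to\infty$ — a contradiction; hence $S(\phi)\to\infty$. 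Since (iii)(a) bounds every $c_{j,\Lambda}$ with $z_j\leq 0$ (including $j=0$) on $\phi\leq 0$, the blow-up of $S$ must come from some $c_{i_0,\Lambda}$ with $z_{i_0}>0$; the case $\phi\to+\infty$ is symmetric. Finally, for (iv), the strict monotonicity from (ii) guarantees one-sided limits in $[-\infty,\infty]$. Splitting $\tilde f_\Lambda=\sum_{i:\,z_i>0}z_i c_{i,\Lambda}+\sum_{j:\,z_j<0}z_j c_{j,\Lambda}$ and applying (iii)(a) to bound the ``opposite-sign'' part near each infinity while (iii)(b) forces the ``same-sign'' part to be unbounded in absolute value, one gets $\tilde f_\Lambda(\phi)\to\mp\infty$ as $\phi\to\pm\infty$; continuity and the intermediate value theorem then give range $\mathbb{R}$. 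The main obstacle is step (ii): the rank-one structure of $(g_{ij})$ couples all species through the single scalar $T$, and the sign of $\tilde f_\Lambda'$ is only manifest after recasting it as the Cauchy--Schwarz inequality above, with strictness supplied precisely by the sign-mixing hypothesis on the valences.
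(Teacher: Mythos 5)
Your proof is correct, and parts (i), (iii), and (iv) follow essentially the same route as the paper. The main divergence is in part (ii): the paper writes the differentiated system in matrix form $(D_\Lambda+\Lambda G)\,\mathrm d\mathbf c_\Lambda/\mathrm d\phi=-\mathbf z$ with $D_\Lambda=\mathrm{diag}(1/c_{0,\Lambda},\dots,1/c_{N,\Lambda})$ and $G=\boldsymbol\lambda\boldsymbol\lambda^{\mathsf T}$, and concludes $\tilde f_\Lambda'=-\mathbf z^{\mathsf T}(D_\Lambda+\Lambda G)^{-1}\mathbf z<0$ purely from the abstract fact that the inverse of a positive definite matrix is positive definite and $\mathbf z\neq0$. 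You instead solve for the scalar $T=\sum_j\lambda_j c_{j,\Lambda}'$ explicitly (an implicit Sherman--Morrison computation for a rank-one perturbation of a diagonal matrix) and reduce the sign of $\tilde f_\Lambda'=-\sum z_i^2c_{i,\Lambda}+\Lambda A^2/B$ to Cauchy--Schwarz. The two are equivalent, and your scalar form makes the mechanism more transparent. One small overstatement on your part: you attribute strictness to strict Cauchy--Schwarz via the sign-mixing hypothesis, but in fact the bound $\Lambda\sum_{i=1}^N\lambda_i^2c_{i,\Lambda}<B=1+\Lambda\sum_{i=0}^N\lambda_i^2c_{i,\Lambda}$ (coming from the $1$ and from $\lambda_0^2c_{0,\Lambda}>0$) already forces strict inequality even with non-strict Cauchy--Schwarz; the hypothesis $z_iz_j<0$ is used in (ii) only to guarantee $\sum_{i=1}^N z_i^2c_{i,\Lambda}>0$. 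Your handling of (iv) is also slightly more careful than the paper's cross-reference to Proposition~\ref{prop:2.01}(iv): since (iii)(b) here yields only $\limsup$'s rather than limits, one genuinely needs the monotonicity from (ii) to upgrade unboundedness to $\lim_{\phi\to\pm\infty}\tilde f_\Lambda(\phi)=\mp\infty$, which you spell out.
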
\begin{proof}By (A3) and (A4), system \eqref{eq:1.13} can be expressed as
\begin{equation}
\label{eq:2.13}
\ln(c_{i,\Lambda})+z_i\phi+\Lambda\sum_{j=0}^N\lambda_i\lambda_jc_{j,\Lambda}=\Lambda\lambda_i\tilde\mu_0+\hat\mu_i\quad\text{for}~\phi\in\mathbb R~\text{and}~i=0,1,\dots,N.
\end{equation}
Following a similar process in the proof of Proposition~\ref{prop:2.01}(i), we can use \eqref{eq:2.13} to obtain \eqref{eq:1.24}--\eqref{eq:1.25}.
Then we denote \eqref{eq:1.25} as $k_1(c_{0,\Lambda},\phi)=0$, where $k_1$ is defined by
\[k_1(t,\phi)=\ln t+\Lambda\sum_{j=0}^N\lambda_0\lambda_j t^{\lambda_j/\lambda_0}\exp(\bar\mu_j-z_j\phi)-\Lambda\lambda_0\tilde\mu_0-\hat\mu_0\quad\text{for}~t>0~\text{and}~\phi\in\mathbb R.\]
Notice that, for any $\phi\in\mathbb R$, $k_1$ is strictly increasing for $t>0$ and the range of $k_1$ is entire space $\mathbb R$.
Then there exists a unique positive number $c_{0,\Lambda}(\phi)$ such that $k_1(c_{0,\Lambda}(\phi),\phi)=0$ for $\phi\in\mathbb R$.
Moreover, since $k_1$ is smooth for $t>0$, $\phi\in\mathbb R$, and
\[\frac{\partial k_1}{\partial t}(t,\phi)=\frac1t+\Lambda\sum_{j=0}^N\lambda_j^2t^{(\lambda_j-\lambda_0)/\lambda_0}\exp(\bar\mu_j-z_j\phi)>0\quad\text{for}~t>0~\text{and}~\phi\in\mathbb R,\]then by the implicit function theorem (cf. \cite[Theorem 3.3.1]{2002krantz}), $c_{0,\Lambda}=c_{0,\Lambda}(\phi)$ is a smooth and positive function on $\mathbb R$.
Therefore, by \eqref{eq:1.24}, each $c_{i,\Lambda}$ is smooth and positive on $\mathbb R$ and we complete the proof of (i).

To prove (ii), we differentiate \eqref{eq:2.13} with respect to $\phi$ and obtain
\begin{equation}
\label{eq:2.14}
(D_\Lambda+\Lambda G)\frac{\mathrm d{\bf c}_\Lambda}{\mathrm d\phi}=-{\bf z}\quad\text{for}~\phi\in\mathbb R,
\end{equation}
where $D_\Lambda=\text{diag}(\frac1{c_{0,\Lambda}},\cdots,\frac1{c_{N,\Lambda}})$ is positive definite, $G=[\lambda_0~\cdots~\lambda_N]^{\mathsf{T}}[\lambda_0~\cdots~\lambda_N]$, ${\bf c}_\Lambda=[c_{0,\Lambda}~\cdots~c_{N,\Lambda}]^{\mathsf{T}}$, and ${\bf z}=[z_0~\cdots~z_N]^{\mathsf{T}}$.
It is obvious that $D_\Lambda+\Lambda G$ is positive definite and invertible with inverse matrix $(D_\Lambda+\Lambda G)^{-1}$ which is also positive definite.
Then \eqref{eq:2.14} gives $\mathrm d{\bf c}_\Lambda/\mathrm d\phi=-(D_\Lambda+\Lambda G)^{-1}{\bf z}$, and $\mathrm d\tilde f_\Lambda/\mathrm d\phi$ becomes
\[
\frac{\mathrm d\tilde f_{\Lambda}}{\mathrm d\phi}=\sum_{i=1}^Nz_i\frac{\mathrm dc_{i,\Lambda}}{\mathrm d\phi}={\bf z}^{\mathsf T}\frac{\mathrm d{\bf c}_\Lambda}{\mathrm d\phi}=-{\bf z}^{\mathsf T}\left(D_\Lambda+\Lambda G\right)^{-1}{\bf z}<0\quad\text{for}~\phi\in\mathbb R.
\]
Here we have used the fact that ${\bf z}\neq0$ and we complete the proof of (ii).

To prove (iii), we firstly suppose that $z_k\geq0$ for some $k\in\{0,\dots,N\}$.
Then \eqref{eq:2.13} implies $\displaystyle\sup_{\phi\geq0}c_{k,\Lambda}(\phi)\leq e^{\mu_k}$.
Here we have used the fact that $c_{i,\Lambda}(\phi)>0$ for $\phi\in\mathbb R$ and $i=0,1,\cdots,N$.
The proof for the case of $z_k\leq0$ is similar to the case of $z_k\geq0$ so we omit it here.
Hence the proof of (iii)(a) is complete.
On the other hand, for (iii)(b), since $z_iz_j<0$ for some $i,j\in\{1,\dots,N\}$, then both index sets $I'=\{i:z_i>0\}$ and $J'=\{j:z_j<0\}$ are nonempty.
Now we claim that there exists $i_0\in I'$ such that $\displaystyle\limsup_{\phi\to-\infty}c_{i_0,\Lambda}(\phi)=\infty$.
We prove this by contradiction.
Suppose $\sup_{\phi\leq0}c_{i,\Lambda}(\phi)<\infty$ for all $i\in I'$.
Then there exists $K_1>0$ such that $0<c_{i,\Lambda}(\phi)<K_1$ for $\phi\leq0$ and $i\in I'$.
By equation \eqref{eq:2.13} and (iii)(a), we have
{\small
\[z_i\phi=\mu_i-\ln(c_{i,\Lambda}(\phi))-\Lambda\lambda_i\sum_{k=0}^N\lambda_jc_{k,\Lambda}(\phi)\geq\mu_i-\ln(K_1)-\Lambda\lambda_i\left(\sum_{k\in I'}\lambda_kK_1+\sum_{k\notin I'}\lambda_ke^{\mu_k}\right),\]}for $i\in I'$ and $\phi\leq0$,
which leads a contradiction by letting $\phi\to-\infty$.
Hence there exists $i_0\in I'$ such that $\displaystyle\limsup_{\phi\to-\infty}c_{i_0,\Lambda}(\phi)=\infty$.
The proof for the case that there exists $j_0\in J'$ such that $\displaystyle\limsup_{\phi\to\infty}c_{j_0,\Lambda}(\phi)=\infty$ is similar to the case that $\displaystyle\limsup_{\phi\to-\infty}c_{i_0,\Lambda}(\phi)=\infty$ for some $i_0\in I'$ so we omit it here. Therefore, the proof of (iii)(b) is complete.

The proof of (iv) is similar to the proof of Proposition~\ref{prop:2.01}(iv) so we omit it here. The proof of Proposition~\ref{prop:2.07} is complete.
\end{proof}

\subsection{Analysis of \texorpdfstring{$c_i^*$}{ci*} and \texorpdfstring{$\tilde f^*$}{~f*} under (A3) and (A4)}
\label{sec:2.4}

In this section, we apply the implicit function theorem to show the existence of $c_i^*$ and use the implicit function theorem on Banach space to show the convergence of $c_{0,\Lambda}$ as $\Lambda\to\infty$ (see Proposition~\ref{prop:2.08}). Then by \eqref{eq:1.24} and \eqref{eq:1.27}, we can obtain the convergence of $c_{i,\Lambda}$ and $f_\Lambda$ as $\Lambda\to\infty$.

\begin{prop}
\label{prop:2.08}Let $c_i^*$ be defined in \eqref{eq:1.28}--\eqref{eq:1.29}.
\begin{itemize}
\item[(i)] Equations \eqref{eq:1.28}--\eqref{eq:1.29} have a unique solution $(c_0^*,\dots,c_N^*)$ and each function $c_i^*=c_i^*(\phi)$ is a smooth and positive function for $\phi\in\mathbb R$ and $i=0,1,\dots,N$.
\item[(ii)] $\displaystyle\lim_{\Lambda\to\infty}\|c_{i,\Lambda}-c_i^*\|_{\mathcal C^m\left[a,b\right]}=0$ for $i=0,1,\dots,N$, $m\in\mathbb N$ and $a<b$.
\end{itemize}
\end{prop}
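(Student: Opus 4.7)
The plan is to handle (i) as a one-variable existence/uniqueness problem and (ii) as an application of the Banach-space implicit function theorem, paralleling the strategy of Proposition~\ref{prop:2.03}(ii) but now driven by (1.25) instead of (1.17).

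For (i), I would observe that by (1.28) the system reduces to finding $c_0^* > 0$ satisfying
\[F(c_0^*(\phi), \phi) = \tilde\mu_0, \qquad F(t, \phi) := \sum_{j=0}^N \lambda_j\, t^{\lambda_j/\lambda_0}\, e^{\bar\mu_j - z_j \phi}.\]
Since every $\lambda_j > 0$, $F(\cdot, \phi)$ is strictly increasing on $(0, \infty)$ with range $(0, \infty)$, so a unique positive root $c_0^*(\phi)$ exists for each $\phi$. The map $F$ is jointly smooth with $\partial_t F > 0$, so the classical implicit function theorem yields $c_0^* \in \mathcal{C}^\infty(\mathbb R)$; equation (1.28) then produces smooth positive $c_i^*$ for $i = 1, \dots, N$, and uniqueness of the tuple $(c_0^*, \dots, c_N^*)$ is immediate.

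For (ii), fix $m \in \mathbb N$ and $a < b$ and abbreviate $\|\cdot\|_{\mathcal{C}^m} := \|\cdot\|_{\mathcal{C}^m[a,b]}$. Set $\delta = \Lambda^{-1}$, $w_\delta = \ln(c_{0,\Lambda})$, and $w^* = \ln(c_0^*)$; the goal reduces to showing $\|w_\delta - w^*\|_{\mathcal{C}^m} \to 0$ as $\delta \to 0^+$. Dividing (1.25) by $\Lambda$ recasts it, together with (1.29), in the unified form $K(w, \delta) = 0$, where $K: \mathcal{C}^m[a,b] \times \mathbb R \to \mathcal{C}^m[a,b]$ is defined by
\[K(w, \delta)(\phi) := \delta w(\phi) - \delta \hat\mu_0 + \sum_{j=0}^N \lambda_0 \lambda_j\, e^{(\lambda_j/\lambda_0)\, w(\phi)}\, e^{\bar\mu_j - z_j \phi} - \lambda_0 \tilde\mu_0.\]
Because $\mathcal{C}^m[a,b]$ is a Banach algebra and $t \mapsto e^{\alpha t}$ is entire, $K$ is $\mathcal{C}^1$, and by (i) one has $K(w^*, 0) = 0$. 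A direct computation gives
\[D_w K(w^*, 0)\, h = A(\phi)\, h, \qquad A(\phi) := \sum_{j=0}^N \lambda_j^2\, (c_0^*(\phi))^{\lambda_j/\lambda_0}\, e^{\bar\mu_j - z_j \phi},\]
so $D_w K(w^*, 0)$ is multiplication by the strictly positive smooth function $A$, an isomorphism of $\mathcal{C}^m[a,b]$ with bounded inverse $h \mapsto h/A$.

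The Banach-space implicit function theorem \cite[Corollary 15.1]{1973deimling} then supplies $\delta_0 > 0$ and a unique $\mathcal{C}^1$ curve $\delta \mapsto \tilde w(\cdot, \delta)$ in a $\mathcal{C}^m$-neighborhood of $w^*$ with $\tilde w(\cdot, 0) = w^*$ and $K(\tilde w(\cdot, \delta), \delta) = 0$. The pointwise uniqueness in Proposition~\ref{prop:2.07}(i) forces $\tilde w(\cdot, \delta) = w_\delta$ on $[a, b]$ for all small $\delta > 0$, so $\|w_\delta - w^*\|_{\mathcal{C}^m} \to 0$, and exponentiation yields $\|c_{0,\Lambda} - c_0^*\|_{\mathcal{C}^m} \to 0$. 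For $i \geq 1$, applying (1.24) together with continuity of the smooth Nemytskii operator $u \mapsto u^{\lambda_i/\lambda_0}$ on the open positive cone of $\mathcal{C}^m[a,b]$ transfers the convergence to $c_{i,\Lambda} \to c_i^*$. I expect the principal technical hurdle to be the isomorphism property of $D_w K(w^*, 0)$: the logarithmic substitution $w = \ln c_0$ is essential here, because the exponents $\lambda_j/\lambda_0$ need not be integers, so $c_0^{\lambda_j/\lambda_0}$ would not be a smooth Nemytskii operator near $c_0 = 0$, whereas $e^{(\lambda_j/\lambda_0)w}$ is entire in $w$ and produces a clean strictly positive multiplier $A(\phi)$.
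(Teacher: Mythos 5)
Your proof is correct and follows essentially the same approach the paper intends: the paper's own proof is simply ``similar to Proposition~\ref{prop:2.03}, omitted,'' and you have faithfully filled in the details, with part (i) mirroring the monotone-in-$t$ argument of Proposition~\ref{prop:2.07}(i) and part (ii) repeating the Banach-space implicit-function-theorem strategy of Proposition~\ref{prop:2.03}(ii) with $K$ adapted to \eqref{eq:1.25} and $D_wK(w^*,0)$ identified as multiplication by the strictly positive function $A(\phi)=\sum_{j=0}^N\lambda_j^2(c_0^*)^{\lambda_j/\lambda_0}e^{\bar\mu_j-z_j\phi}$. Your closing remark on why the logarithmic substitution is indispensable when the exponents $\lambda_j/\lambda_0$ are non-integer is a correct and worthwhile observation that the paper leaves implicit.
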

\begin{proof}
The proof of the proposition is similar to that of Proposition~\ref{prop:2.03} so we omit it here.
\end{proof}

\begin{coro}\label{coro:2.09}$\displaystyle\lim_{\Lambda\to\infty}\|\tilde f_\Lambda-\tilde f^*\|_{\mathcal C^m[a,b]}=0$ for $m\in\mathbb N$ and $a<b$.
\end{coro}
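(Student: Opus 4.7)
The plan is to mimic exactly the proof of Corollary~\ref{coro:2.05}: the statement is an immediate consequence of Proposition~\ref{prop:2.08}(ii) together with the linearity of the defining formulas \eqref{eq:1.27} and \eqref{eq:1.30}. Since the result is labelled a corollary, no new analytic input is required beyond what has already been established.

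Concretely, I would fix $m\in\mathbb N$ and $a<b$, and write
\[
\tilde f_\Lambda(\phi)-\tilde f^*(\phi)=\sum_{i=1}^N z_i\bigl(c_{i,\Lambda}(\phi)-c_i^*(\phi)\bigr)\qquad\text{for }\phi\in[a,b].
\]
Then I would apply the triangle inequality together with homogeneity of the norm $\|\cdot\|_{\mathcal C^m[a,b]}=\sum_{k=0}^m\|(\cdot)^{(k)}\|_\infty$ to obtain
\[
\|\tilde f_\Lambda-\tilde f^*\|_{\mathcal C^m[a,b]}\le\sum_{i=1}^N|z_i|\,\|c_{i,\Lambda}-c_i^*\|_{\mathcal C^m[a,b]}.
\]
Each summand on the right tends to $0$ as $\Lambda\to\infty$ by Proposition~\ref{prop:2.08}(ii); since the sum is finite and the weights $|z_i|$ are fixed constants independent of $\Lambda$, the left-hand side also tends to $0$, which is the claimed conclusion.

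There is essentially no obstacle here: the entire content of the corollary has already been absorbed into Proposition~\ref{prop:2.08}(ii), whose proof (done via the implicit function theorem on Banach spaces applied to \eqref{eq:1.25}, in direct analogy with Proposition~\ref{prop:2.03}(ii)) is the substantive step. The remaining work is purely a bookkeeping use of the triangle inequality on a finite linear combination, which is why the author is justified in stating this as a one-line corollary rather than a theorem.
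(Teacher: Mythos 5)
Your proposal is correct and is exactly the paper's argument: the authors reduce Corollary~\ref{coro:2.09} to Proposition~\ref{prop:2.08}(ii) via the finite linear combinations \eqref{eq:1.27} and \eqref{eq:1.30}, just as in Corollary~\ref{coro:2.05}. Spelling out the triangle inequality is only a minor elaboration of what the paper leaves implicit.
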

\begin{proof}
The proof of the corollary is similar to that of Corollary~\ref{coro:2.05} so we omit it here.
\end{proof}

For function $\tilde f^*$, we have the following propositions.
\begin{prop}
\label{prop:2.10}
Let $\tilde f^*$ be defined in \eqref{eq:1.30}.
\begin{itemize}
\item[(i)] Function $\tilde f^*$ is strictly decreasing on $\mathbb R$.
\item[(ii)] Function $\tilde f^*$ satisfies $m^*<\tilde f^*(\phi)<M^*$ for all $\phi\in\mathbb R$, where \[m^*=\lim_{\phi\to\infty}\tilde f^*(\phi)<0~\text{and}~M^*=\lim_{\phi\to-\infty}\tilde f^*(\phi)>0.\]
\end{itemize}
\end{prop}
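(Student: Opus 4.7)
The plan is to mirror the strategy of Proposition~\ref{prop:2.06}: implicit differentiation for the monotonicity in (i), and a direct asymptotic analysis for (ii). The crucial difference here is that $c_0^*$ is defined only implicitly through \eqref{eq:1.29}, so no closed form analogous to \eqref{eq:2.12} is available and the limits must be extracted from the conservation law itself.

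For (i), I would take logarithms in \eqref{eq:1.28} and differentiate in $\phi$ to get $\mathrm d c_i^*/\mathrm d\phi = (\lambda_i c_i^*)/(\lambda_0 c_0^*)\cdot \mathrm d c_0^*/\mathrm d\phi - z_i c_i^*$ for $i=0,1,\ldots,N$. Differentiating the conservation law $\sum_{i=0}^N \lambda_i c_i^* = \tilde\mu_0$ and substituting (using $z_0 = 0$) yields an explicit formula for $\mathrm d c_0^*/\mathrm d\phi$, which plugged back into $\mathrm d\tilde f^*/\mathrm d\phi = \sum_{i=1}^N z_i\,\mathrm d c_i^*/\mathrm d\phi$ produces
\[
\frac{\mathrm d\tilde f^*}{\mathrm d\phi} = \frac{\left(\sum_{i=1}^N \lambda_i z_i c_i^*\right)^2}{\sum_{i=0}^N \lambda_i^2 c_i^*} - \sum_{i=1}^N z_i^2 c_i^*.
\]
Cauchy--Schwarz applied to $(\lambda_i\sqrt{c_i^*})_{i=1}^N$ and $(z_i\sqrt{c_i^*})_{i=1}^N$, combined with the strict inequality $\sum_{i=0}^N \lambda_i^2 c_i^* > \sum_{i=1}^N \lambda_i^2 c_i^*$ (which holds because $\lambda_0^2 c_0^*(\phi) > 0$ by Proposition~\ref{prop:2.08}(i)), then gives $\mathrm d\tilde f^*/\mathrm d\phi < 0$ on $\mathbb R$.

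For (ii), \eqref{eq:1.29} immediately gives $0 < c_i^*(\phi) \leq \tilde\mu_0/\lambda_i$ for all $\phi$, so $\tilde f^*$ is bounded; combined with (i), the limits $M^* := \lim_{\phi\to-\infty}\tilde f^*(\phi)$ and $m^* := \lim_{\phi\to\infty}\tilde f^*(\phi)$ exist, are finite, and satisfy $m^* < \tilde f^*(\phi) < M^*$. The remaining task is the strict sign of these limits. For $M^* > 0$, I would fix some $i_0$ with $z_{i_0} > 0$ (such an index exists by hypothesis) and use $\lambda_{i_0} c_{i_0}^*(\phi) \leq \tilde\mu_0$ together with \eqref{eq:1.28} to deduce $(c_0^*(\phi))^{\lambda_{i_0}/\lambda_0} \leq (\tilde\mu_0/\lambda_{i_0})\, e^{z_{i_0}\phi - \bar\mu_{i_0}} \to 0$ as $\phi \to -\infty$, hence $c_0^*(\phi) \to 0$. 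Then \eqref{eq:1.28} forces $c_i^*(\phi) \to 0$ for every $i$ with $z_i < 0$, since both $(c_0^*)^{\lambda_i/\lambda_0}$ and $e^{\bar\mu_i - z_i\phi}$ vanish. The conservation law then yields $\sum_{z_i>0}\lambda_i c_i^*(\phi) \to \tilde\mu_0$. Setting $\alpha := \min\{z_i/\lambda_i : z_i > 0\} > 0$, we have $\sum_{z_i>0}z_i c_i^*(\phi) \geq \alpha\sum_{z_i>0}\lambda_i c_i^*(\phi) \to \alpha\tilde\mu_0$, while $\sum_{z_i<0}z_i c_i^*(\phi) \to 0$, so $M^* \geq \alpha\tilde\mu_0 > 0$; the bound $m^* < 0$ is symmetric.

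The main obstacle is extracting $c_0^*(\phi) \to 0$ as $\phi \to -\infty$ from the implicit relation \eqref{eq:1.29}, since in contrast with Proposition~\ref{prop:2.06} there is no closed-form expression to differentiate or limit directly. Once this vanishing is secured via the crude but effective bound $\lambda_{i_0} c_{i_0}^*(\phi) \leq \tilde\mu_0$, all subsequent asymptotics follow routinely from \eqref{eq:1.28} and the conservation law.
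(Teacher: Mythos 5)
Your proposal is correct and follows essentially the same approach as the paper's proof: (i) is identical (implicit differentiation of \eqref{eq:1.28}--\eqref{eq:1.29} yields the same formula for $\mathrm d\tilde f^*/\mathrm d\phi$, followed by Cauchy--Schwarz, with the $\lambda_0^2 c_0^*>0$ term being a valid alternative source of strictness to the paper's appeal to $z_iz_j<0$), and (ii) relies on the same chain of observations (uniform bounds from the conservation law, $c_0^*\to 0$ at the relevant end, propagation of vanishing to the wrong-sign indices, conservation forcing a strictly signed limit). The only notable stylistic difference is that in the final step you argue directly via the quantitative lower bound $M^*\geq\alpha\tilde\mu_0$ with $\alpha=\min\{z_i/\lambda_i:z_i>0\}$, whereas the paper argues by contradiction (assuming $m^*=0$ forces all $c_i^*\to0$, violating $\sum\lambda_ic_i^*=\tilde\mu_0$); your version is slightly more informative but otherwise equivalent.
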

\begin{proof}Differentiating \eqref{eq:1.28}--\eqref{eq:1.29} with respect to $\phi$ gives
\[
\sum_{j=0}^N\lambda_j\frac{\mathrm dc_j^*}{\mathrm d\phi}=0,\quad\frac{\mathrm dc_i^*}{\mathrm d\phi}=\frac{\lambda_i}{\lambda_0}\frac{c_i^*}{c_0^*}\frac{\mathrm dc_0^*}{\mathrm d\phi}-z_ic_i^*,\quad\text{for}~\phi\in\mathbb R~\text{and}~i=1,\dots,N,\]
which implies $\displaystyle
\frac{\mathrm dc_0^*}{\mathrm{d}\phi}=\lambda_0c_0^*\frac{\displaystyle\sum_{i=1}^Nz_i\lambda_ic_i^*}{\displaystyle\sum_{i=0}^N\lambda_i^2c_i^*}$ for $\phi\in\mathbb R$.
Consequently, we have
\[
\frac{\mathrm d\tilde f^*}{\mathrm d\phi}=\sum_{i=1}^Nz_i\frac{\mathrm dc_i^*}{\mathrm d\phi}
=\frac1{\lambda_0c_0^*}\frac{\mathrm dc_0^*}{\mathrm d\phi}\sum_{i=1}^Nz_i\lambda_ic_i^*-\sum_{i=1}^Nz_i^2c_i^*=\frac{\displaystyle\left(\sum_{i=1}^Nz_i\lambda_ic_i^*\right)^2}{\displaystyle\sum_{i=0}^N\lambda_i^2c_i^*}-\sum_{i=1}^Nz_i^2c_i^*<0\]
for $\phi\in\mathbb R$.
Here the last inequality comes from the Cauchy--Schwarz inequality and the fact that $\lambda_i>0$, $c_i^*>0$ for $i\in\{0,1,\dots,N\}$, and $z_iz_j<0$ for some $i,j\in\{1,\dots,N\}$.
Therefore, we complete the proof of (i).

To prove boundedness of $\tilde f^*$, we note that \eqref{eq:1.29} implies $
0<c_i^*(\phi)<\tilde{\mu}_0/\lambda_i$ for $\phi\in\mathbb R$ and $i=0,1,\dots,N$, and
\begin{equation}
\label{eq:2.15}
|\tilde f^*(\phi)|\leq\sum_{i=1}^N|z_i|c_i^*(\phi)\leq\tilde{\mu}_0\sum_{i=1}^N\frac{|z_i|}{\lambda_i}\leq\tilde{\mu}_0N\max_{0\leq i\leq N}\frac{|z_i|}{\lambda_i}\quad\text{for}~\phi\in\mathbb R.
\end{equation}
On the other hand, by \eqref{eq:1.29} and \eqref{eq:1.30}, function $\tilde f^*$ can be represented as
\begin{equation}
\label{eq:2.16}
\tilde f^*(\phi)=\sum_{i=1}^Nz_i(c_0^*(\phi))^{\lambda_i/\lambda_0}\exp(\bar{\mu}_i-z_i\phi)\quad\text{for}~\phi\in\mathbb R.
\end{equation}
By (i) and \eqref{eq:2.15}, function $\tilde f^*$ is strictly decreasing and bounded on $\mathbb R$ so the limit $\displaystyle\lim_{\phi\to\infty}\tilde f^*(\phi)$, denoted by $m^*$, exists and is finite.
Since $c_i^*<\tilde\mu_0/\lambda_i$ for $i=0,1\dots,N$, \eqref{eq:1.28} implies
\begin{equation}
\label{eq:2.17}
\lim_{\phi\to\infty}c_0^*(\phi)=\lim_{\phi\to\infty}\left[c_i^*(\phi)\exp(-\bar{\mu}_i+z_i\phi)\right]^{\lambda_0/\lambda_i}=0\quad\text{for}~z_i<0.
\end{equation}
Moreover, we use \eqref{eq:1.28} and \eqref{eq:2.17} to get
\begin{equation}
\label{eq:2.18}
\lim_{\phi\to\infty}c_i^*(\phi)=\lim_{\phi\to\infty}\left[(c_0^*(\phi))^{\lambda_i/\lambda_0}\exp(\bar{\mu}_i-z_i\phi)\right]=0\quad\text{for}~z_i>0,
\end{equation}
and hence \eqref{eq:2.16} and \eqref{eq:2.18} imply $\displaystyle m^*=\lim_{\phi\to\infty}\tilde f^*(\phi)\leq0$.
Now, we prove $m^*<0$ by contradiction.
Suppose $m^*=0$.
Then \eqref{eq:2.18} gives $\displaystyle\lim_{\phi\to\infty}c_i^*(\phi)=0$ for $i=0,1,\dots,N$, which contradicts with \eqref{eq:1.29} (by letting $\phi\to\infty$) and $\tilde\mu_0>0$.
Similarly, we may prove $\displaystyle\lim_{\phi\to-\infty}\tilde f^*(\phi)=M^*>0$ and complete the proof of Proposition~\ref{prop:2.10}.
\end{proof}

\section{Proof of Theorems~\ref{thm:1.1} and~\ref{thm:1.5}}
\label{sec:3}

Since $f_\Lambda$ and $\tilde f_\Lambda$ are unbounded on $\mathbb R$ but $f^*$ and $\tilde f^*$ are bounded on $\mathbb R$ so we cannot obtain the uniform convergence of $f_\Lambda$ and $\tilde f_\Lambda$ on $\mathbb R$ as $\Lambda$ goes to infinity (see Propositions~\ref{prop:2.01}, \ref{prop:2.06},~\ref{prop:2.07} and~\ref{prop:2.10}).
In order to use the convergence of $f_\Lambda$ and $\tilde f_\Lambda$ in space $\mathcal C^m[a,b]$ for $m\in\mathbb N$ and $a<b$, we firstly have to prove the uniform boundedness of $\phi_\Lambda$ (the solution of \eqref{eq:1.21} and \eqref{eq:1.26}) with respect to $\Lambda$ (see Lemmas~\ref{lma:3.1} and \ref{lma:3.4}).
Here we notice that $\rho_0$ may be any nonzero smooth function and the boundary condition may be the Robin boundary condition but not Dirichlet boundary condition so one cannot simply use the maximum principle on \eqref{eq:1.19} and \eqref{eq:1.20} to obtain the uniform boundedness of $\phi_\Lambda$.

\subsection{Uniform boundedness of \texorpdfstring{$\phi_\Lambda$}{} and \texorpdfstring{$c_{0,\Lambda}(\phi_\Lambda)$ under (A1) and (A2)}{}}
\label{sec:3.1}

\begin{lma}\label{lma:3.1}
There exist positive constants $M_1\geq1$, $M_2$, and $M_3$ independent of $\Lambda$ such that
\begin{itemize}
\item[(i)] $\displaystyle\max_{x\in\overline\Omega}c_{0,\Lambda}(\phi_\Lambda(x))\leq M_1$ for $\Lambda\geq1$.
\item[(ii)] $\|\phi_\Lambda\|_{L^\infty(\Omega)}\leq M_2$ for $\Lambda\geq1$.
\item[(iii)] $\displaystyle\min_{x\in\overline\Omega}c_{0,\Lambda}(\phi_\Lambda(x))\geq M_3$ for $\Lambda\geq1$.
\end{itemize}
\end{lma}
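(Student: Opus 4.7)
The three items will be established in order, each using its predecessors; the key ingredients are the algebraic identity \eqref{eq:1.17} (valid pointwise in $\phi$) and the weak maximum principle applied to \eqref{eq:1.19}. For (i), my plan is to work with \eqref{eq:1.17} directly, without reference to the boundary value problem. The hypothesis $z_iz_j<0$ for some $i,j\in\{1,\dots,N\}$ guarantees that $\sum_{j=1}^N e^{\bar\mu_j-z_j\phi}$ attains a positive minimum on $\mathbb R$, so $H(\gamma,z_1,\dots,z_N,\phi)$ is bounded below by some $h_0>0$ uniformly in $\phi$. I then dichotomize on whether $c_{0,\Lambda}(\phi)\ge 1$: in that case $\ln c_{0,\Lambda}(\phi)\ge 0$, so \eqref{eq:1.17} forces $H(\phi)\,c_{0,\Lambda}(\phi)\le\tilde\mu_0+\hat\mu_0/\Lambda$, giving $c_{0,\Lambda}(\phi)\le(\tilde\mu_0+|\hat\mu_0|)/h_0$ whenever $\Lambda\ge1$; the complementary case is trivial. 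Taking $M_1:=\max\{1,(\tilde\mu_0+|\hat\mu_0|)/h_0\}$ yields a uniform bound in $\phi\in\mathbb R$ and $\Lambda\ge1$, and specializing to $\phi=\phi_\Lambda(x)$ gives (i).

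For (ii), I plan to combine the weak maximum principle with \eqref{eq:1.15}. At a maximizer $x_0\in\overline\Omega$ of $\phi_\Lambda$, either $x_0\in\partial\Omega$ with $\partial_\nu\phi_\Lambda(x_0)\ge 0$, whence \eqref{eq:1.15} gives $\phi_\Lambda(x_0)\le\|\phi_{bd}\|_{\mathcal C^0(\partial\Omega)}$, or $x_0\in\Omega$ with $-\nabla\cdot(\varepsilon\nabla\phi_\Lambda)(x_0)\ge 0$, hence $f_\Lambda(\phi_\Lambda(x_0))\ge-\|\rho_0\|_{L^\infty(\Omega)}$. Strict monotonicity of $f_\Lambda$ (Proposition~\ref{prop:2.01}(ii)) then inverts to $\phi_\Lambda(x_0)\le f_\Lambda^{-1}(-\|\rho_0\|_{L^\infty(\Omega)})$, and a symmetric argument at the minimum supplies the lower bound. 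The principal difficulty is to make this inversion uniform in $\Lambda\ge1$: a direct asymptotic analysis of \eqref{eq:1.17} shows that, for fixed $\Lambda$, $f_\Lambda(\phi)$ decays to $-\infty$ only at rate $\sim\phi/\Lambda$ as $\phi\to\infty$, so a naive bound grows linearly in $\Lambda$. I would overcome this by splitting the range of $\Lambda$: on any compact interval $[1,\Lambda_0]$, continuous dependence of $\phi_\Lambda$ on $\Lambda$ (via the implicit function theorem applied to the elliptic problem) provides a uniform bound, and on the tail $\Lambda\ge\Lambda_0$, the $\mathcal C^m$-convergence $f_\Lambda\to f^*$ from Corollary~\ref{coro:2.05}, together with the strict decrease and boundedness of $f^*$ (Proposition~\ref{prop:2.06}), lets me pick a constant $K^*$ depending only on the data with $f_\Lambda(K^*)\le-\|\rho_0\|_{L^\infty(\Omega)}$ and $f_\Lambda(-K^*)\ge\|\rho_0\|_{L^\infty(\Omega)}$ for every $\Lambda\ge\Lambda_0$. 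The resulting $M_2$ is independent of $\Lambda$.

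Finally, (iii) follows from (i) and (ii) together with a reuse of \eqref{eq:1.17}. Since $\phi_\Lambda(x)\in[-M_2,M_2]$ for every $x\in\overline\Omega$, the continuous function $H$ is bounded above on this interval by some $H_{\max}$. On the subset of $x\in\overline\Omega$ where $c_{0,\Lambda}(\phi_\Lambda(x))\le 1$, one has $\ln c_{0,\Lambda}(\phi_\Lambda(x))\le 0$, so \eqref{eq:1.17} yields $c_{0,\Lambda}(\phi_\Lambda(x))\ge(\tilde\mu_0+\hat\mu_0/\Lambda)/H_{\max}$, which exceeds $\tilde\mu_0/(2H_{\max})$ once $\Lambda$ is sufficiently large. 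On the complementary subset $c_{0,\Lambda}(\phi_\Lambda(x))>1$ trivially, and for $\Lambda$ in the remaining compact range the joint positivity and continuity of $c_{0,\Lambda}$ on $[-M_2,M_2]\times[1,\Lambda_0]$ delivers a uniform positive lower bound. Combining these yields $M_3>0$.
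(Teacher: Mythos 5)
Your parts (i) and (iii) follow essentially the paper's own route (dichotomize on whether $c_{0,\Lambda}\gtrless1$ and read the bound off \eqref{eq:1.17}, using the uniform positive lower bound on $H$ for (i) and the upper bound on $H$ over $[-M_2,M_2]$ for (iii)), and they are sound. The genuine gap is in part (ii), at the step where you invert $f_\Lambda$ uniformly in $\Lambda$. You propose a constant $K^*$, depending only on the data, with $f_\Lambda(K^*)\le-\|\rho_0\|_{L^\infty(\Omega)}$ and $f_\Lambda(-K^*)\ge\|\rho_0\|_{L^\infty(\Omega)}$ for all $\Lambda\ge\Lambda_0$, citing Corollary~\ref{coro:2.05} and Proposition~\ref{prop:2.06}. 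But those two results show exactly the opposite in general: $f_\Lambda(K^*)\to f^*(K^*)$ as $\Lambda\to\infty$, and $f^*$ is \emph{bounded}, $m^*<f^*<M^*$. Hence if $\|\rho_0\|_{L^\infty(\Omega)}\ge-m^*$ (or $\ge M^*$), then for every fixed $K^*$ one has $f_\Lambda(K^*)>-\|\rho_0\|_{L^\infty(\Omega)}$ for all large $\Lambda$, so no $\Lambda$-independent $K^*$ exists; the only thresholds reachable at a $\Lambda$-independent location are those strictly inside $(m^*,M^*)$. Since $\rho_0$ is an arbitrary smooth function while $m^*,M^*$ are fixed by $\tilde\mu_0,\gamma,Z,z_i,\bar\mu_i$, your tail argument collapses precisely in the regime the lemma must cover; this is the obstruction the paper points out at the start of section~\ref{sec:3} (``one cannot simply use the maximum principle on \eqref{eq:1.19}''), and your own (correct) observation that $f_\Lambda(\phi)\to-\infty$ only at rate $\sim\phi/\Lambda$ is the symptom of it.

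The paper's fix is the step you omit: first solve the linear problem $-\nabla\cdot(\varepsilon\nabla\psi)=\rho_0$ with homogeneous Robin data, set $\bar\phi_\Lambda=\phi_\Lambda-\psi$, and apply the maximum-principle argument to \eqref{eq:3.01}, which has no $\rho_0$ on the right-hand side. At an interior maximum of $\bar\phi_\Lambda$ the inequality becomes $f_\Lambda(\psi(x_0)+\bar\phi_\Lambda(x_0))\ge0$, and the threshold $0$ does lie in $(m^*,M^*)$, so either the locally uniform convergence $f_\Lambda\to f^*$ or, as in the paper, the $\phi\to\pm\infty$ asymptotics of Proposition~\ref{prop:2.01}(iii) yields a $\Lambda$-independent bound on $\psi(x_0)+\bar\phi_\Lambda(x_0)$, hence on $\|\phi_\Lambda\|_{L^\infty(\Omega)}$ after adding back $\|\psi\|_{L^\infty(\Omega)}$ and the boundary contribution from the Robin condition. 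Your handling of the boundary case and of the compact range $1\le\Lambda\le\Lambda_0$ is acceptable, and your (iii) goes through once (ii) is repaired along these lines.
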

\begin{proof}Since $z_iz_j<0$ for some $i,j\in\{1,\dots,N\}$, then $\displaystyle\lim_{\phi\to\pm\infty}\sum_{i=1}^Ne^{\bar\mu_i-z_i\phi}=\infty$, and\[
K:=(\tilde\mu_0+\hat\mu_0)/\left(1-\gamma+\frac\gamma Z\min_{\phi\in\mathbb R}\sum_{i=1}^N\exp(\bar\mu_i-z_i\phi)\right)<\infty.\]
Let $M_1=\max\{K,1\}$ and $\Lambda\geq1$. We claim that $c_{0,\Lambda}(\phi_\Lambda(x))\leq M_1$.
Suppose that $\Omega_1=\{x\in\overline\Omega:c_{0,\Lambda}(\phi_\Lambda(x))>1\}$ is nonempty.
Otherwise, due to $M_1\geq1$, $c_{0,\Lambda}(\phi_\Lambda(x))\leq1\leq M_1$ for $x\in\overline\Omega$, which is trivial.
By \eqref{eq:1.17}, we have
$c_{0,\Lambda}(\phi_\Lambda(x))\leq\frac{\tilde\mu_0+\Lambda^{-1}\hat\mu_0}{H(\gamma,z_1,\dots,z_N,\phi_\Lambda(x))}\leq K\leq M_1$ for $x\in\Omega_1$.
Here we have used the fact that $\ln(c_{0,\Lambda}(\phi_\Lambda(x)))>0$ for $x\in\Omega_1$ and $\Lambda\geq1$.
Therefore, we complete the proof of (i).

To prove (ii), let $\psi$ be the solution of equation $-\nabla\cdot(\varepsilon\nabla\psi)=\rho_0$ in $\Omega$ with the Robin boundary condition $\psi+\eta\frac{\partial\psi}{\partial\nu}=0$ on $\partial\Omega$, and let $\bar\phi_\Lambda=\phi_\Lambda-\psi$.
Then function $\bar\phi_\Lambda$ satisfies
\begin{equation}
\label{eq:3.01}
-\nabla\cdot(\varepsilon\nabla\bar\phi_\Lambda)=f_\Lambda(\phi_\Lambda)\quad\text{in}~\Omega.
\end{equation}
Since $\psi$ is independent of $\Lambda$ and is continuous on $\overline\Omega$, then $\bar\phi_\Lambda$ is uniformly bounded if and only if $\phi_\Lambda$ is uniformly bounded, it suffices to show that $\displaystyle\max_{\overline\Omega}\bar\phi_\Lambda\leq M_2$ and $\displaystyle\min_{\overline\Omega}\bar\phi_\Lambda\geq-M_2$ for $\Lambda\geq1$, where $M_2$ is a positive constant independent of $\Lambda$.

Now we prove that $\displaystyle\max_{\overline\Omega}\bar\phi_\Lambda\leq M_2$ for $\Lambda\geq1$, where $M_2$ is a positive constant independent of $\Lambda$.
Suppose by contradiction that there exists a sequence $\Lambda_k$ with $\displaystyle\lim_{k\to\infty}\Lambda_k=\infty$ such that $\displaystyle\max_{\overline\Omega}\bar\phi_{\Lambda_k}\geq k$ for $k\in\mathbb N$.
Then there exists $x_k\in\Omega$ such that $\displaystyle\bar\phi_{\Lambda_k}(x_k)=\max_{\overline\Omega}\bar\phi_{\Lambda_k}$ which implies $\nabla\bar\phi_{\Lambda_k}\left(x_k\right)=0$ and $\Delta\bar\phi_{\Lambda_k}(x_k)\leq0$.
Note that because of the Robin boundary condition of $\bar\phi_{\Lambda_k}$, maximum point $x_k$ cannot be located on the boundary $\partial\Omega$ as $k$ sufficiently large.
Hence without loss of generality, we assume each $x_k\in\Omega$ for $k\in\mathbb N$.
For the sake of simplicity, we set $c_{i,k}:=c_{i,\Lambda_k}$, $\phi_k:=\phi_{\Lambda_k}$, $f_k:=f_{\Lambda_k}$, and $\bar\phi_k:=\bar\phi_{\Lambda_k}$.
Hence by \eqref{eq:3.01} with $\nabla\bar\phi_k(x_k)=0$, $\Delta\bar\phi_k(x_k)\leq0$ and function $\varepsilon$ is positive, we have
\begin{equation}
\label{eq:3.02}
0\leq-\nabla\varepsilon(x_k)\cdot\nabla\bar\phi_k(x_k)-\varepsilon(x_k)\Delta\bar\phi_k(x_k)=f_k(\phi_k(x_k)).
\end{equation}
Since $\displaystyle f_k(\phi_k)=\sum_{i=1}^Nz_ic_{i,k}(\phi_k)$, we can use \eqref{eq:3.02} to get
\begin{equation}
\label{eq:3.03}
0<\sum_{z_i<0}(-z_i)c_{i,k}(\psi(x_k)+\bar\phi_k(x_k))\leq\sum_{z_i>0}z_ic_{i,k}(\psi(x_k)+\bar\phi_k(x_k)).
\end{equation}
By Proposition~\ref{prop:2.01}(iii)(b) and \eqref{eq:3.03}, $\displaystyle\lim_{k\to\infty}\sum_{z_i<0}(-z_i)c_{i,k}(\psi(x_k)+\bar\phi_k(x_k))=0$.
This leads a contradiction with Proposition~\ref{prop:2.01}(iii)(b).
Hence we complete the proof to show that $\displaystyle\max_{\overline\Omega}\bar\phi_\Lambda\leq M_2$ for $\Lambda\geq1$, where $M_2$ is a positive constant independent of $\Lambda$.
The proof for the case that $\displaystyle\min_{\overline\Omega}\bar\phi_\Lambda\geq-M_2$ for $\Lambda\geq1$ is similar to the case that $\displaystyle\max_{\overline\Omega}\bar\phi_\Lambda\leq M_2$ for $\Lambda\geq1$ so we omit it here.
Thus, the proof of (ii) is complete.

Finally, we state the proof of (iii).
Suppose to the contrary that there exists $\Lambda_k$ with $\displaystyle\lim_{k\to\infty}\Lambda_k=\infty$, and $x_k\in\overline\Omega$ is the minimum point of $c_{0,\Lambda_k}(\phi_{\Lambda_k}(x))$ such that $\displaystyle\lim_{k\to\infty}c_{0,\Lambda_k}(\phi_{\Lambda_k}(x_k))=0$.
Notice that by \eqref{eq:1.17}, $c_{0,\Lambda}(\phi_\Lambda)>0$ for $\Lambda\geq1$ and $x\in\overline\Omega$.
Then there exists $N_1\in\mathbb N$ such that $\ln(c_{0,\Lambda_k}(\phi_{\Lambda_k}(x_k)))<0$ for $k>N_1$.
By \eqref{eq:1.17} with $\phi=\phi_{\Lambda_k}(x_k)$, we get
\begin{equation}
\label{eq:3.04}
H(\gamma,z_1,\dots,z_N,\phi_{\Lambda_k}(x_k))c_{0,\Lambda_k}(\phi_{\Lambda_k}(x_k))>\tilde\mu_0+\frac{\hat\mu_0}{\Lambda_k}\quad\text{for}~k>N_1.
\end{equation}
Taking $k\to\infty$, (ii) and \eqref{eq:3.04} imply $0\geq\tilde\mu_0$, which contradicts assumption (A2).
Therefore, we complete the proof of (iii). The proof of Lemma~\ref{lma:3.1} is complete.
\end{proof}

\begin{rmk}\label{rmk:3.2}
By Lemma~\ref{lma:3.1} and \eqref{eq:1.16}, we have $M_4\leq c_{i,\Lambda}(\phi_\Lambda(x))\leq M_5$ for $x\in\overline\Omega$, $\Lambda\geq1$ and $i=0,1,\dots,N$, where $M_4$ and $M_5$ are positive constants independent of $\Lambda$.
Moreover, due to $\displaystyle f_\Lambda(\phi_\Lambda)=\sum_{i=1}^Nz_ic_{i,\Lambda}(\phi_\Lambda)$, there exists a positive constant $M_6$ independent of $\Lambda$ such that $\|f_\Lambda(\phi_\Lambda)\|_{L^\infty(\overline\Omega)}\leq M_6$ for $\Lambda\geq1$.
\end{rmk}

\subsection{Convergence of \texorpdfstring{$\phi_\Lambda$}{phi-Lambda} under (A1) and (A2)}
\label{sec:3.2}

Since $\phi_\Lambda$ is the solution of equation $-\nabla\cdot(\varepsilon\nabla\phi_\Lambda)=\rho_0+f_\Lambda(\phi_\Lambda)$ in $\Omega$ with the Robin boundary condition $\phi_\Lambda+\eta\frac{\partial\phi_\Lambda}{\partial\nu}=\phi_{bd}$ on $\partial\Omega$, we use the $W^{2,p}$ estimate (cf. \cite[Theorem 15.2]{1959agmon}) to get
$\|\phi_\Lambda\|_{W^{2,p}(\Omega)}\leq C(\|\rho_0+f_\Lambda(\phi_\Lambda)\|_{L^p(\Omega)}+\|\phi_{bd}\|_{W^{1,p}(\Omega)})
$
for all $p>1$, where $C$ is a positive constant independent of $\Lambda$.
Hence by Remark~\ref{rmk:3.2}, we have the uniform bound estimate of $\phi_\Lambda$ in $W^{2,p}$ norm. This implies that there exists a sequence of functions $\{\phi_{\Lambda_k}\}_{k=1}^{\infty}$, with $\displaystyle\lim_{k\to\infty}\Lambda_k=\infty$, such that $\phi_{\Lambda_k}$ converges to $\phi^*$ weakly in $W^{2,p}(\Omega)$.
By Corollary~\ref{coro:2.05}, we have the convergence of function $f_{\Lambda_k}$ to $f^*$ in $\mathcal{C}^m[-M_2,M_2]$ for $m\in\mathbb N$, so $\phi^*$ satisfies \eqref{eq:1.20} in a weak sense, where the positive constant $M_2$ comes from Lemma~\ref{lma:3.1}(i).
Let $w_k=\phi_{\Lambda_k}-\phi^*$, $c_{i,k}:=c_{i,\Lambda_k}$, and $f_k:=f_{\Lambda_k}$.
Then by Sobolev embedding, $w_k\in\mathcal C^{1,\alpha}(\overline\Omega)$ for $\alpha\in(0,1)$, and $\displaystyle\lim_{k\to\infty}\|w_k\|_{\mathcal C^{1,\alpha}(\overline\Omega)}=0$.
Moreover, $w_k$ satisfies $
-\nabla\cdot(\varepsilon\nabla w_k)=f_k(w_k+\phi^*)-f^*(\phi^*)$ in $\Omega$ with the boundary condition $w_k+\eta\frac{\partial w_k}{\partial\nu}=0$ on $\partial\Omega$.
Using the Schauder's estimate (cf. \cite[Theorem 6.30]{1977gilbarg}) with the mathematical induction, we get
\begin{equation}
\label{eq:3.05}
\begin{aligned}
&{\color{white}\leq}\,\|w_k\|_{\mathcal C^{m+2,\alpha}(\overline\Omega)}
\leq C\|f_k(w_k+\phi^*)-f^*(\phi^*)\|_{\mathcal C^{m,\alpha}(\overline\Omega)}\\&\leq C'\sum_{i=1}^N\left(\|c_{i,k}(w_k+\phi^*)-c_i^*(w_k+\phi^*)\|_{\mathcal C^{m,\alpha}(\overline\Omega)}+\|c_i^*(w_k+\phi^*)-c_i^*(\phi^*)\|_{\mathcal C^{m,\alpha}(\overline\Omega)}\right),
\end{aligned}
\end{equation}
for all $m\in\mathbb N$ and $\alpha\in(0,1)$, where $C$ and $C'$ are positive constants independent of $k$.
By Proposition~\ref{prop:2.06}(i) and induction hypothesis $\displaystyle\lim_{k\to\infty}\|w_k\|_{\mathcal C^{m,\alpha}(\overline\Omega)}=0$, we may use \eqref{eq:3.05} to get $\displaystyle\lim_{k\to\infty}\|w_k\|_{\mathcal C^{m+2,\alpha}(\overline\Omega)}=0$, i.e. $\displaystyle\lim_{k\to\infty}\|\phi_{\Lambda_k}-\phi^*\|_{C^{m+2,\alpha}(\overline\Omega)}=0$ for $m\in\mathbb N$ and $\alpha\in (0,1)$.
Therefore, $\phi^*$ is the solution of \eqref{eq:1.26} with the Robin boundary condition \eqref{eq:1.15}.

To complete the proof of Theorem~\ref{thm:1.1}, we need to prove the following claim.
\begin{claim}
\label{claim:3.3}
For any $m\in\mathbb N$, we have $\displaystyle\lim_{\Lambda\to\infty}\|\phi_\Lambda-\phi^*\|_{\mathcal C^m(\overline\Omega)}=0$.
\end{claim}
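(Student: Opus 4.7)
The plan is to upgrade the subsequential $\mathcal{C}^m$ convergence already obtained in Section~\ref{sec:3.2} to convergence of the full family as $\Lambda\to\infty$. The only missing ingredient is \emph{uniqueness} of the solution to the limiting problem \eqref{eq:1.23} with the Robin boundary condition \eqref{eq:1.15}; once uniqueness is in hand, a standard ``every-subsequence-has-a-subsubsequence'' argument closes the claim.

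First I would establish uniqueness for the limiting equation. Suppose $\phi_1^*,\phi_2^*$ both solve \eqref{eq:1.23} with \eqref{eq:1.15}. Setting $w=\phi_1^*-\phi_2^*$, testing the equation for the difference against $w$, and integrating by parts gives
\[
\int_\Omega\varepsilon|\nabla w|^2\,\mathrm dx+\frac1\eta\int_{\partial\Omega}\varepsilon\,w^2\,\mathrm dS=\int_\Omega\bigl(f^*(\phi_1^*)-f^*(\phi_2^*)\bigr)w\,\mathrm dx,
\]
where the boundary term uses $\partial_\nu w=-w/\eta$ (the case $\eta=0$ is handled by the Dirichlet boundary condition and Poincar\'e's inequality). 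Since $f^*$ is strictly decreasing on $\mathbb{R}$ by Proposition~\ref{prop:2.06}(i), the right-hand side is $\leq0$, while the left-hand side is $\geq0$. Thus both vanish, forcing $\nabla w\equiv0$ in $\Omega$ together with $w\equiv0$ on $\partial\Omega$, so $w\equiv0$.

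Next I would argue by contradiction. Suppose Claim~\ref{claim:3.3} fails: there exist $m_0\in\mathbb N$, $\varepsilon_0>0$, and a sequence $\Lambda_k\to\infty$ with $\|\phi_{\Lambda_k}-\phi^*\|_{\mathcal C^{m_0}(\overline\Omega)}\geq\varepsilon_0$ for every $k$. Since Lemma~\ref{lma:3.1}(ii) and Remark~\ref{rmk:3.2} provide uniform bounds on $\phi_{\Lambda_k}$ and $f_{\Lambda_k}(\phi_{\Lambda_k})$ that are independent of $k$, the $W^{2,p}$ estimate combined with the Schauder bootstrap developed in Section~\ref{sec:3.2} applies verbatim to this sequence. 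Consequently, after passing to a subsequence, $\phi_{\Lambda_k}$ converges in $\mathcal C^{m_0}(\overline\Omega)$ to some $\tilde\phi$ which solves \eqref{eq:1.23} with the Robin boundary condition \eqref{eq:1.15}. By the uniqueness established above, $\tilde\phi=\phi^*$, contradicting $\|\phi_{\Lambda_k}-\phi^*\|_{\mathcal C^{m_0}(\overline\Omega)}\geq\varepsilon_0$. This proves the claim, and hence completes the proof of Theorem~\ref{thm:1.1}.

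The main obstacle is really the uniqueness step, because everything else is a rerun of the subsequence argument already carried out. Fortunately, Proposition~\ref{prop:2.06}(i) gives \emph{strict} monotonicity of $f^*$, which is exactly what the standard energy argument for semilinear elliptic equations with monotone nonlinearity requires; the Robin condition contributes the nonnegative boundary term above, so the argument goes through without additional structural hypotheses. The identical scheme (uniqueness of \eqref{eq:1.31} via strict monotonicity of $\tilde f^*$ from Proposition~\ref{prop:2.10}(i), then subsequence contradiction) will also yield the analogous convergence statement needed for Theorem~\ref{thm:1.5}.
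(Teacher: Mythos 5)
Your proof is correct, and the overall skeleton — prove uniqueness of the limiting problem, then upgrade subsequential convergence to full convergence — matches the paper's. The point of departure is the uniqueness step. The paper writes the difference equation as $-\nabla\cdot(\varepsilon\nabla u)=c_1(x)u$ with $c_1(x)$ a difference quotient of $f^*$, notes $c_1<0$ by strict monotonicity, and then invokes the strong maximum principle together with the sign of the normal derivative at a boundary extremum (the Robin condition $u+\eta\,\partial_\nu u=0$ turns a nonnegative boundary maximum into a nonpositive value, and symmetrically for the minimum). You instead test the difference equation against $w=\phi_1^*-\phi_2^*$ and integrate by parts, obtaining a sign-definite identity
\[
\int_\Omega\varepsilon|\nabla w|^2\,\mathrm dx+\frac1\eta\int_{\partial\Omega}\varepsilon\,w^2\,\mathrm dS=\int_\Omega\bigl(f^*(\phi_1^*)-f^*(\phi_2^*)\bigr)w\,\mathrm dx\leq0,
\]
which forces $w\equiv0$ directly from the strict decrease of $f^*$. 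The energy route is a bit more elementary: it avoids the pointwise construction of $c_1$ and the Hopf-type boundary-point reasoning, needs only $H^1$ regularity and monotonicity, and it localizes cleanly to the variational structure behind the Robin condition. The paper's maximum-principle route is more classical in this context and yields the comparison $u\le0$ and $u\ge0$ separately, which can be informative on its own. Both handle $\eta=0$ correctly (Dirichlet case). Note also that in your identity the right-hand side already vanishes pointwise only where $\phi_1^*=\phi_2^*$, so strict monotonicity of $f^*$ gives $w\equiv0$ immediately even without appealing to the boundary term; either way the conclusion is the same. Your subsequence-contradiction framing is a cleaner formalization of what the paper does implicitly (the paper shows any two subsequential limits coincide), and correctly relies on the compactness established via the $W^{2,p}$ and Schauder estimates in Section~\ref{sec:3.2} together with Lemma~\ref{lma:3.1} and Remark~\ref{rmk:3.2}.
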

\begin{proof}
Suppose that there exist the sequences $\{\Lambda_k\}$ and $\{\tilde{\Lambda}_k\}$ tending to infinity such that sequences $\{\phi_{\Lambda_k}\}$ and $\{\phi_{\tilde{\Lambda}_k}\}$ have limits $\phi_1^*$ and $\phi_2^*$, respectively.
It is clear that $\phi_1^*$ and $\phi_2^*$ satisfy \eqref{eq:1.31} with the Robin boundary condition \eqref{eq:1.15}.
Now we want to prove that $\phi_1^*\equiv\phi_2^*$.
Let $u=\phi_1^*-\phi_2^*$. Subtracting \eqref{eq:1.31} with $\phi^*=\phi_2^*$ from that with $\phi^*=\phi_1^*$, we obtain $-\nabla\cdot(\varepsilon\nabla u)=c_1(x)u$ in $\Omega$, where $c_1(x)=\frac{f^*(\phi_1^*(x))-f^*(\phi_2^*(x))}{\phi_1^*(x)-\phi_2^*(x)}$ if $\phi_1^*(x)\neq\phi_2^*(x)$; $\frac{\mathrm{d}f^*}{\mathrm{d}\phi}(\phi_1^*(x))$ if $\phi_1^*(x)=\phi_2^*(x)$.
By Proposition~\ref{prop:2.06}, we have $c_1<0$ in $\Omega$ which comes from the fact that if $f$ is a strictly decreasing function on $\mathbb R$, then $\frac{f(\alpha)-f(\beta)}{\alpha-\beta}<0$ for $\alpha\neq\beta$.
Since $\nabla\cdot(\varepsilon\nabla u)+c_1(x)u=0$ with $c_1<0$ in $\Omega$, it is obvious that $u$ cannot be a nonzero constant.
Then by the strong maximum principle, we have that $u$ attains its nonnegative maximum value and nonnpositive minimum value at the boundary point.
Suppose $u$ has nonnegative maximum value and attains its maximum value at $x^*\in\partial\Omega$.
Then by the boundary condition of $u$ which is $u+\eta\frac{\partial u}{\partial\nu}=0$ on $\partial\Omega$, we get $u(x^*)=-\eta\frac{\partial u}{\partial\nu}(x^*)\leq0$, and hence $u\leq u(x^*)\leq0$ on $\overline\Omega$.
Similarly, we obtain $u\geq0$ in $\overline\Omega$, and hence $u\equiv0$.
Therefore, we conclude that $\phi_1^*\equiv\phi_2^*$ and complete the proof of Theorem~\ref{thm:1.1}.
\end{proof}

\subsection{Proof of Theorem~\ref{thm:1.5}}
\label{sec:3.3}

\begin{lma}
\label{lma:3.4}
There exist positive constants $M_7\geq1$, $M_8$, and $M_9$ independent of $\Lambda$ such that
\begin{itemize}
\item[(i)] $\displaystyle\max_{x\in\overline{\Omega}}c_{0,\Lambda}(\phi_\Lambda(x))\leq M_7$ for $\Lambda\geq1$.
\item[(ii)] $\|\phi_\Lambda\|_{L^\infty(\Omega)}\leq M_8$ for $\Lambda\geq1$.
\item[(iii)] $\displaystyle\min_{x\in\overline{\Omega}}c_{0,\Lambda}(\phi_\Lambda(x))\geq M_9$ for $\Lambda\geq1$.
\end{itemize}
\end{lma}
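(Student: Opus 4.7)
The plan is to mirror the proof of Lemma~\ref{lma:3.1}, replacing Propositions~\ref{prop:2.01} and~\ref{prop:2.06} by their (A3)--(A4) counterparts (Propositions~\ref{prop:2.07} and~\ref{prop:2.10}) and Corollary~\ref{coro:2.05} by Corollary~\ref{coro:2.09}. The three parts are established in the order (i) $\Rightarrow$ (ii) $\Rightarrow$ (iii), since the contradiction argument in (iii) will use the uniform $L^\infty$ bound on $\phi_\Lambda$ furnished by (ii), and (ii) in turn uses (i) only implicitly through Corollary~\ref{coro:2.09} applied on a compact interval.

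For (i) and (iii) I would argue directly from equation~\eqref{eq:1.25}. For (i), when $c_{0,\Lambda}(\phi_\Lambda(x))\ge 1$ the logarithm term is nonnegative and may be dropped; retaining only the $j=0$ summand (for which $\bar\mu_0=0$ and the exponent $\lambda_0/\lambda_0=1$, so $z_0=0$ kills the $\phi$-dependence) yields $\Lambda\lambda_0^2\,c_{0,\Lambda}(\phi_\Lambda(x))\le \Lambda\lambda_0\tilde\mu_0+\hat\mu_0$, which for $\Lambda\ge 1$ gives a bound independent of $\Lambda$; the case $c_{0,\Lambda}<1$ is trivial. For (iii), I would argue by contradiction as in Lemma~\ref{lma:3.1}(iii): if $c_{0,\Lambda_k}(\phi_{\Lambda_k}(x_k))\to 0$ at minimum points $x_k$, then $\ln c_{0,\Lambda_k}(\phi_{\Lambda_k}(x_k))<0$ for large $k$, so discarding this negative term from~\eqref{eq:1.25} and dividing by $\Lambda_k$ yields
\[
\sum_{j=0}^N \lambda_0\lambda_j\bigl(c_{0,\Lambda_k}(\phi_{\Lambda_k}(x_k))\bigr)^{\lambda_j/\lambda_0} e^{\bar\mu_j-z_j\phi_{\Lambda_k}(x_k)} > \lambda_0\tilde\mu_0+\hat\mu_0/\Lambda_k.
\]
By (ii) the exponentials stay uniformly bounded; since each $\lambda_j/\lambda_0>0$, every summand on the left tends to $0$, while the right-hand side tends to $\lambda_0\tilde\mu_0>0$, a contradiction.

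For (ii), I would follow the translation trick of Lemma~\ref{lma:3.1}(ii). Let $\psi$ solve $-\nabla\cdot(\varepsilon\nabla\psi)=\rho_0$ with the homogeneous Robin condition $\psi+\eta\partial_\nu\psi=0$, set $\bar\phi_\Lambda=\phi_\Lambda-\psi$, and suppose for contradiction that $\max_{\overline\Omega}\bar\phi_{\Lambda_k}\to\infty$ along some $\Lambda_k\to\infty$. Because $\bar\phi_\Lambda$ then satisfies the inhomogeneous Robin condition with data $\phi_{bd}$, any boundary maximum is bounded by $\max_{\partial\Omega}\phi_{bd}$, so the maxima must lie in the interior for $k$ large, and the elliptic inequality at $x_k$ yields $\tilde f_k(\phi_k(x_k))\ge 0$ with $\phi_k(x_k)\to\infty$. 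The main obstacle is exactly here: Proposition~\ref{prop:2.10} only gives the finite limit $\lim_{\phi\to\infty}\tilde f^*(\phi)=m^*<0$, in contrast to the $\mp\infty$ behavior of $f^*$ exploited in Lemma~\ref{lma:3.1}(ii), so one cannot simply quote the proof of that lemma. The fix is to pick any fixed $\phi_0\in\mathbb R$ with $\tilde f^*(\phi_0)<0$ (which exists since $m^*<0$); Corollary~\ref{coro:2.09} then gives $\tilde f_{\Lambda_k}(\phi_0)\to \tilde f^*(\phi_0)<0$, hence $\tilde f_{\Lambda_k}(\phi_0)<0$ for $k$ large; and the strict decrease of $\tilde f_{\Lambda_k}$ (Proposition~\ref{prop:2.07}(ii)) propagates this negativity to every $\phi\ge\phi_0$, contradicting $\tilde f_k(\phi_k(x_k))\ge 0$ once $\phi_k(x_k)\ge\phi_0$. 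A symmetric argument using $M^*>0$ yields a uniform lower bound on $\bar\phi_\Lambda$, and hence on $\phi_\Lambda$.
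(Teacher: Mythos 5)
Your proof is correct, and it is a faithful reconstruction of the method of Lemma~\ref{lma:3.1}; the paper itself only cites the companion preprint \cite{2022lyu} for this proof, so there is no in-paper version to compare against. Two remarks are worth making. First, you correctly diagnosed that the proof of Lemma~\ref{lma:3.1}(ii) does \emph{not} transfer verbatim: that proof invokes the full limits of Proposition~\ref{prop:2.01}(iii)(b), whereas under (A3)--(A4) the analogue Proposition~\ref{prop:2.07}(iii)(b) only furnishes $\limsup$ information, so the final contradiction step in the argument of Lemma~\ref{lma:3.1}(ii) collapses. Your replacement --- choose $\phi_0$ with $\tilde f^*(\phi_0)<0$ (available because $m^*<0$ by Proposition~\ref{prop:2.10}(ii)), upgrade this to $\tilde f_{\Lambda_k}(\phi_0)<0$ for large $k$ via Corollary~\ref{coro:2.09}, then propagate the negativity to all $\phi\ge\phi_0$ by the strict monotonicity of $\tilde f_{\Lambda_k}$ from Proposition~\ref{prop:2.07}(ii) --- is the right fix, and is in fact cleaner and more robust than the original argument (it would also simplify Lemma~\ref{lma:3.1}(ii)). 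Second, your proof of (i) by retaining only the $j=0$ summand of \eqref{eq:1.25} (using $\bar\mu_0=0$ and $z_0=0$ so that summand has no $\phi$-dependence) is the natural adaptation; the lower bound on $H$ used in Lemma~\ref{lma:3.1}(i) has no direct analogue under (A3)--(A4), so one must isolate a $\phi$-independent term exactly as you did. The contradiction argument for (iii), using (ii) to keep the exponential weights bounded and $\lambda_j/\lambda_0>0$ to force each summand to vanish, is correct as written.
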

\begin{proof}
The proof is identical to the proofs of Lemmas~3.1, 3.2, and 3.3 in \cite{2022lyu} so we omit it here.
\end{proof}

\begin{rmk}
\label{rmk:3.5}
By Lemma \ref{lma:3.4} and \eqref{eq:1.24}, we have $M_{10}\leq c_{i,\Lambda}(\phi_\Lambda(x))\leq M_{11}$ for $x\in\overline\Omega$, $\Lambda\geq1$ and $i=0,1,\dots,N$, where $M_{10}$ and $M_{11}$ are positive constants independent of $\Lambda$.
Moreover, due to $\displaystyle\tilde f_\Lambda(\phi_\Lambda)=\sum_{i=1}^N z_i c_{i,\Lambda}(\phi_\Lambda)$, $\|\tilde f_\Lambda(\phi_\Lambda)\|_{L^\infty(\overline\Omega)}\leq M_{12}$ for $\Lambda\geq1$, where $M_{12}$ is a positive constant independent of $\Lambda$.
\end{rmk}
For the proof of convergence of $\phi_\Lambda$ in $\mathcal C^m(\overline\Omega)$ for $m\in\mathbb N$, we can use Remark~\ref{rmk:3.5} and the same method in section~\ref{sec:3.2} to complete the proof of Theorem~\ref{thm:1.5}.

\section{Numerical methods}
\label{sec:4}

In this section, we introduce numerical methods to solve the PB-steric equations \eqref{eq:1.13}--\eqref{eq:1.14} with the Robin boundary condition \eqref{eq:1.15} to get the oscillation of total ionic charge densities $f_\Lambda$ for $\Lambda=0.5,1,2,4$ (see Figure~\ref{fig1}).
Throughout this section, we consider one dimensional domain $\Omega=(-1,1)$ and assume $N=3$, $z_0=0$, $z_1=1$, $z_2=-1$, $z_3=2$, $\tilde\mu_0=1$, $\hat\mu_0=0$, $\rho_0\equiv0$, $\eta=\varepsilon=0.1$, and $\phi_{bd}(\pm1)=\pm10$.
The assumptions (A1)$'''$ and (A2) are satisfied when the parameters $1-\gamma=\gamma_1=\gamma_2=0.01$, $\gamma_3=0.97$, $\bar\mu_0=\bar\mu_1=0$, and $\bar\mu_2=\bar\mu_3=-5$.

To solve \eqref{eq:1.13}, we use the command {\tt fsolve} in Matlab to perform the trust-region algorithm (cf. \cite{2011fan}) on the following discretized nonlinear equation:
\begin{align}
\label{eq:4.1}
\ln(\boldsymbol c_{i,\Lambda})+z_i\boldsymbol\phi+\Lambda\sum_{j=0}^Ng_{ij}\boldsymbol c_{j,\Lambda}=\Lambda\tilde\mu_i+\hat\mu_i\quad\text{for}~i=0,1,\dots,N,
\end{align}
where $\boldsymbol\phi=[\phi_0,\dots,\phi_L]^{\mathsf T}$ is the regular partition of the interval $[-10,10]$ with $L=1024$, and $\boldsymbol c_{i,\Lambda}=[c_{i,\Lambda}(\phi_0),\dots,c_{i,\Lambda}(\phi_L)]^{\mathsf T}$.
Moreover, in order to get the solution $\phi_\Lambda$ of \eqref{eq:1.13}--\eqref{eq:1.14}, we employ the Legendre--Gauss--Lobatto (LGL) points $\{x_k\}_{k=0}^L$ (cf. \cite{2015elbaghdady}) as the partition of the interval $[-1,1]$ with $L=256$ to discrtize \eqref{eq:1.13}--\eqref{eq:1.14} as the following algebraic equations:
\begin{align}
\label{eq:4.2}
&-\text D_{L+1}\boldsymbol\varepsilon\text D_{L+1}\boldsymbol\phi_\Lambda=\boldsymbol\rho_0+\sum_{i=1}^Nz_i\boldsymbol c_{i,\Lambda},\\&
\label{eq:4.3}
\ln(\boldsymbol c_{i,\Lambda})+z_i\boldsymbol\phi_\Lambda+\Lambda\sum_{j=0}^Ng_{ij}\boldsymbol c_{j,\Lambda}=\Lambda\tilde\mu_i+\hat\mu_i\quad\text{for}~i=0,1,\dots,N,
\end{align}
where $\text D_{L+1}=[d_{ij}]_{0\leq i,j\leq L}$ is the matrix satisfying $\text D_{L+1}\boldsymbol\psi=[\psi'(x_0),\dots,\psi'(x_L)]^{\mathsf T}$ for $\boldsymbol\psi=[\psi(x_0),\dots,\psi(x_L)]^{\mathsf T}$, $\boldsymbol\varepsilon
=[\varepsilon(x_0),\dots,\varepsilon(x_L)]^{\mathsf T}$, $\boldsymbol\rho_0=[\rho_0(x_0),\dots,\rho_0(x_L)]^{\mathsf T}$, $\boldsymbol\phi_\Lambda=[\phi_\Lambda(x_0),\dots,\phi_\Lambda(x_L)]^{\mathsf T}$, and $\boldsymbol c_{i,\Lambda}=[c_{i,\Lambda}(\phi_\Lambda(x_0)),\dots,c_{i,\Lambda}(\phi_\Lambda(x_L))]^{\mathsf T}$ for $i=0,1,\dots,N$.

For (A1)$'''$ and (A2), equations \eqref{eq:4.2} and \eqref{eq:4.3} can be denoted as
\begin{align}
\label{eq:4.4}
&-\text D_{L+1}\boldsymbol\varepsilon\text D_{L+1}\boldsymbol\phi_\Lambda=\boldsymbol\rho_0+\sum_{i=1}^Nz_i\boldsymbol c_{0,\Lambda}\circ\exp(\bar\mu_i-z_i\boldsymbol\phi_\Lambda),\\
\label{eq:4.5}
&\ln(\boldsymbol c_{0,\Lambda})+\Lambda\left(1-\gamma+\sum_{j=1}^N\gamma_j\exp(\bar\mu_j-z_j\boldsymbol\phi_\Lambda)\right)\circ\boldsymbol c_{0,\Lambda}=\Lambda\tilde\mu_0+\hat\mu_0.
\end{align}
In addition, we discretize the Robin boundary condition \eqref{eq:1.15} as
\begin{equation}
\label{eq:4.6}
\phi(x_0)-\eta\sum_{k=0}^Ld_{0k}\phi(x_k)=\phi_{bd}(-1),\quad\phi(x_L)+\eta\sum_{k=0}^Ld_{Lk}\phi(x_k)=\phi_{bd}(1),
\end{equation}
which replaces the first and last equations of \eqref{eq:4.4} and \eqref{eq:4.6}.
Then using the command {\tt fsolve} with the initial data $\boldsymbol\phi_\Lambda^{(0)}\equiv0$ and $\boldsymbol c_{0,\Lambda}^{(0)}\equiv1$, we obtain the profiles of $f_\Lambda(\phi)$ and $f_\Lambda(\phi_\Lambda(x))$ in Figure~\ref{fig1}.

\section{Conclusions}

In this paper, we derive the PB-steric equations with a parameter $\Lambda$, and prove that as $\Lambda\to\infty$, the solution of PB-steric equations \eqref{eq:1.13}--\eqref{eq:1.14} converges to the solution of modified PB equations \eqref{eq:1.03}--\eqref{eq:1.04} and \eqref{eq:1.05}--\eqref{eq:1.07} under different assumptions of steric effects and chemical potentials.
Equations \eqref{eq:1.01}--\eqref{eq:1.02}, \eqref{eq:1.03}--\eqref{eq:1.04}, and \eqref{eq:1.05}--\eqref{eq:1.07} were derived based on mean-field approximation (cf. \cite{1997borukhov,2009li}), whereas equations \eqref{eq:1.13}--\eqref{eq:1.14} are derived via adding the approximate Lennard--Jones potential.
We compare equations \eqref{eq:1.01}--\eqref{eq:1.07} and \eqref{eq:1.13}--\eqref{eq:1.14} in the following table to show that the PB-steric equations \eqref{eq:1.13}--\eqref{eq:1.14} can be regarded as a general model of PB equations.

\begin{center}\begin{tabular}{c|c|l}
Model&Total ionic charge density $\sum_{i=1}^Nz_ic_i(\phi)$& $c_i(\phi)$ ($i=0,1,\dots,N$)\\\hline
\multirow{2}{5.7em}{\eqref{eq:1.01}--\eqref{eq:1.02}}&\multirow{2}{5em}{decreasing}&$c_1$, $c_2$ are monotonic,\\&&$c_0\equiv0$ ($N=2$).\\
\hline
\multirow{2}{5.7em}{\eqref{eq:1.03}--\eqref{eq:1.04}}&\multirow{2}{5em}{decreasing}&$c_1$, $c_2$ are monotonic,\\&&$c_0\equiv0$ ($N=2$).\\
\hline
\multirow{3}{5.7em}{\eqref{eq:1.05}--\eqref{eq:1.07}}&\multirow{3}{5em}{decreasing}&Some of $c_i$'s are oscillatory;\\&&the others are monotonic.\\&&($N\geq2$)\\
\hline
\multirow{6}{5.7em}{\eqref{eq:1.13}--\eqref{eq:1.14}}&\multirow{3}{15em}{~~~~~~oscillatory (Remark~\ref{rmk3})}&Some of $c_i$'s are oscillatory;\\&&the others are monotonic.\\&&($N=3$)\\\cline{2-3}&\multirow{3}{17em}{decreasing (Propositions~\ref{prop:2.06} and \ref{prop:2.10})}&Some of $c_i$'s are oscillatory;\\&&the others are monotonic.\\&&($N\geq2$)
\end{tabular}
\end{center}

\appendix
\section{Derivation of \texorpdfstring{\eqref{eq:1.10}}{(1.10)} and \texorpdfstring{\eqref{eq:1.11}}{(1.11)}}
\label{Appendix.A}

In \cite{2014lin}, the approximate LJ potential $\Psi_{ij,\sigma}$ between the $i$th ion and the $j$th ion can be expressed as $\Psi_{ij,\sigma}(x)=(\Psi_{ij}\chi_\sigma\ast\varphi_\sigma)(x)$ for $x\in\mathbb R^d$ and $i,j=0,1,\dots,N$, where $\ast$ denotes the standard convolution, $\Psi_{ij}$ is the conventional LJ potential between $i$th and $j$th ions defined by $\displaystyle\Psi_{ij}(x)=\frac{\epsilon_{ij}(a_i+a_j)^{12}}{2|x|^{12}}$ for $x\in\mathbb R^d$ and $i,j=0,1,\dots,N$, and $\varphi_\sigma(x)$ is the spatially band-limited function defined by $\varphi_\sigma(x)=\mathcal F^{-1}(1-\chi_{\sigma^{-\gamma}}(\xi))$.
Here $\epsilon_{ij}$ denotes energy coupling constant between the $i$th and the $j$th ions, $a_0$ is the radius of solvent molecules, $a_i$ is the radius of the $i$th ion.
In addition, $\chi_\sigma$ and $\chi_{\sigma^{-\gamma}}$ are the characteristic function of the exterior ball $\mathbb R^d-\overline{B_\sigma(0)}$ and $\mathbb R^d-\overline{B_{\sigma^{-\gamma}}(0)}$ for $0<\gamma<1$, respectively, and $\mathcal F^{-1}$ is the inverse Fourier transform. More precisely, the characteristic function $\chi_s$ is defined by $\chi_s(z)=0~\text{if}~|z|\leq s; 1~\text{if}~|z|>s$. With the approximate LJ potential, we may define the approximate energy functional $E_{LJ,\sigma}$ and show that (cf. \cite{2014lin})
\begin{equation}
\label{eq:A.1}
E_{LJ,\sigma}[c_i,c_j]:=\Lambda g_{ij}\int_{\mathbb R^d}\!c_i(x)c_j(x)\,\mathrm dx\sim\int_{\mathbb R^d}\!c_i(x)(\Psi_{ij}\ast c_j)(x)\,\mathrm dx=E_{LJ}[c_i,c_j],\end{equation}
where $\Lambda=a^{12}\sigma^{d-12}$ satisfies $\displaystyle\lim_{\sigma\to0^+}\Lambda=\infty$, and $\displaystyle g_{ij}=\frac{\omega_d\epsilon_{ij}}{12-d}\frac{(a_i+a_j)^{12}}{a^{12}}$ denotes a dimensionless quantity.
Here $a=\min\{a_i:i=0,1\dots,N\}$ and $\omega_d$ is the surface area of a $d$ dimensional unit ball.
To derive \eqref{eq:1.10} and \eqref{eq:1.11}, we let energy functional
\[E[c_0,c_1,\dots,c_N,\phi]=E_{PB}[c_0,c_1,\dots,c_N,\phi]+\frac12\sum_{i,j=0}^NE_{LJ,\sigma}[c_i,c_j],\]
where $E_{LJ,\sigma}$ is defined in \eqref{eq:A.1} and
\[\begin{aligned}
&E_{PB}[c_0,c_1,\cdots,c_N,\phi]=\int_{\mathbb R^d}\left[-\frac12\varepsilon\left|\nabla\phi\right|^2+k_BT\sum_{i=0}^Nc_i(\ln c_i-1)+4\pi\left(\rho_0+\sum_{i=1}^Nz_ie_0c_i\right)\phi\right]\mathrm dx.\end{aligned}\]
Notice that $E_{PB}$ is the energy functional of conventional PB equation with the form $\displaystyle-\nabla\cdot(\varepsilon\nabla\phi)=\rho_0+\sum_{i=1}^Nz_ie^{\mu_i-z_i\phi}$ which can be obtained by $\delta E_{PB}/\delta\phi=0$ and $\delta E_{PB}/\delta c_i=\mu_i$ for $i=0,1,\dots,N$.
Here $\mu_i$ is the chemical potential.
Besides, $E_{LJ,\sigma}$ is the energy functional of the approximate LJ potentials (cf. \cite{2014lin}), and we may derive the equations
\begin{align}
&\label{eq:A.2}
-\nabla\cdot(\varepsilon\nabla\phi)=4\pi\rho_0+4\pi\sum_{i=1}^Nz_ie_0c_i,
\\\label{eq:A.3}
&
k_BT\ln(c_i)+z_ie_0\phi+\Lambda\sum_{j=0}^Ng_{ij}c_j=\mu_i\quad\text{for}~i=0,1,\dots,N,
\end{align}by
$\delta E/\delta\phi=0$ and $\delta E/\delta c_i=\mu_i$ for $i=0,1,\dots,N$ when matrix $(g_{ij})$ is symmetric.
From \eqref{eq:A.3}, constant $\mu_i$ can be expressed by\begin{equation}\label{eq:A.4}
\mu_i=\Lambda\tilde\mu_i+\hat\mu_i\quad\text{for}~i=0,1,\dots,N.\end{equation}
Therefore, by \eqref{eq:A.2}--\eqref{eq:A.4}, we have \eqref{eq:1.10}--\eqref{eq:1.11}.

\section{An example of oscillatory \texorpdfstring{$f_\Lambda$}{f-lambda} for Remark~\texorpdfstring{\ref{rmk3}}{3}}
\label{Appendix.B}

Under the assumptions (A1)$'''$ and (A2), system \eqref{eq:1.13} can be solved by
\[c_{i,\Lambda}(\phi)=\frac{\displaystyle W_0\left(\Lambda\left(1-\gamma+\sum_{j=1}^3\gamma_je^{\bar\mu_j-z_j\phi}\right)e^{\Lambda\tilde\mu_0+\hat\mu_0}\right)}{\displaystyle \Lambda\left(1-\gamma+\sum_{j=1}^3\gamma_je^{\bar\mu_j-z_j\phi}\right)}e^{\bar\mu_i-z_i\phi}\]
for $\phi\in\mathbb R$ and $i=0,1,2,3$.
Suppose that $\Lambda=4$, $z_0=0$, $z_1=-z_2=1$ $z_3=2$, $\bar\mu_0=\bar\mu_1=0$, $\bar\mu_2=\bar\mu_3=-5$, $\tilde\mu_0=1$, $\hat\mu_0=0$, $\gamma=0.99$, $\gamma_1=\gamma_2=0.01$, and $\gamma_3=0.97$.
Recall the derivative of the Lambert W function (cf. \cite{2008hoorfar,2022mezo}) is $\displaystyle\frac{\mathrm dW_0}{\mathrm dx}=\frac{W_0(x)}{x(1+W_0(x))}$ for $x>-1/e$.
Differentiating $c_{i,4}$ with respect to $\phi$, we obtain
\begin{equation}
\label{eq:B.1}
\begin{aligned}
\frac{\mathrm dc_{i,4}}{\mathrm d\phi}&=\frac{W_0(0.04e^4(1+e^{-\phi}+e^{-5+\phi}+97e^{-5-2\phi}))}{0.04(1+e^{-\phi}+e^{-5+\phi}+97e^{-5-2\phi})}\exp(\bar\mu_i-z_i\phi)\\&~~~\times\left(\frac{(e^{-\phi}-e^{-5+\phi}+194e^{-5-2\phi})W_0(0.04e^4(1+e^{-\phi}+e^{-5+\phi}+97e^{-5-2\phi}))}{(1+e^{-\phi}+e^{-5+\phi}+97e^{-5-2\phi})(1+W_0(0.04e^4(1+e^{-\phi}+e^{-5+\phi}+97e^{-5-2\phi})))}-z_i\right)
\end{aligned}
\end{equation}
for $\phi\in\mathbb R$ and $i=0,1,2,3$.
Let $K_1=0.04(1+e^2+e^{-7}+97e^{-1})$ and $K_2=e^2-e^{-7}+194e^{-1}$.
Then we have
\begin{equation}
\label{eq:B.2}
\frac{\mathrm df_4}{\mathrm d\phi}(-2)=\frac{W_0(e^4K_1)}{K_1}\left[\frac{K_2W_0(e^4K_1)}{25K_1(1+W_0(e^4K_1))}(e^2-e^{-7}+2e^{-1})-(e^2+e^{-7}+4e^{-1})\right].
\end{equation}
It is easy to check that $K_2/K_1>44$ and $3.3e^{3.3}<e^4K_1<3.4e^{3.4}$.
Due to the monotonic increase of $W_0$, we have $3.3<W_0(e^4K_1)<3.4$. Along with \eqref{eq:B.2}, we arrive at the fact $\displaystyle\frac{\mathrm df_4}{\mathrm d\phi}(-2)>0$.
On the other hand, from \eqref{eq:B.1}, we apply L'H\^ospital's law to get $\displaystyle\lim_{\phi\to\infty}\frac{\mathrm dc_{1,4}}{\mathrm d\phi}(\phi)=0$, $\displaystyle\lim_{\phi\to\infty}\frac{\mathrm dc_{2,4}}{\mathrm d\phi}(\phi)=25$, $\displaystyle\lim_{\phi\to\infty}\frac{\mathrm dc_{3,4}}{\mathrm d\phi}(\phi)=0$, which implies $\displaystyle\lim_{\phi\to\infty}\frac{\mathrm df_4}{\mathrm d\phi}(\phi)=-25$.
This shows $\displaystyle\frac{\mathrm df_4}{\mathrm d\phi}(\phi)<0$ when $\phi$ is sufficiently large.
Therefore, $f_4$ is oscillatory and the profile of $f_4$ is in Figure~\ref{fig1}(a).

\section*{Acknowledgments}
The authors express their thanks to reviewers for the careful and insightful review.

\bibliographystyle{siamplain}

\end{document}